\documentclass{amsart}
\usepackage{graphicx}
\usepackage{latexsym}
\usepackage{amsfonts}
\usepackage[all]{xy}
\usepackage{amssymb, mathrsfs, amsfonts, amsmath}
\usepackage{amsbsy}
\usepackage{amsfonts}
\setlength{\topmargin}{0.0in}
\setlength{\textheight}{8.5in} \setlength{\textwidth}{5.5in}
\newtheorem*{acknowledgement}{Acknowledgement}
\newtheorem{corollary}{Corollary}
\newtheorem{definition}{Definition}
\newtheorem{lemma}{Lemma}

\newtheorem{theorem}{Theorem}
\newtheorem{example}{Example}
\numberwithin{equation}{section}

\begin{document}
\title[$3$-Homogeneous quasi-Einstein metrics]{Uniqueness of quasi-Einstein metrics\\ on 3-dimensional homogeneous manifolds}
\author{A. Barros $^{1}$, \,E. Ribeiro Jr $^{2}$ \& J. Silva Filho $^{3}$}
\address{$^{1,2}$Universidade Federal do Cear\'a - UFC, Departamento  de Matem\'atica, Campus do Pici, Av. Humberto Monte, Bloco 914,
60455-760-Fortaleza / CE , Brazil}
\email{abbarros@mat.ufc.br}
\email{ernani@mat.ufc.br}

\thanks{$^{1}$ Partially supported by CNPq/Brazil}
\thanks{$^{2}$ Partially supported by grants from  PJP-FUNCAP/Brazil and CNPq/Brazil}

\noindent \address{$^{3}$ Instituto de Ci\^encias exatas e da Natureza - UNILAB, Campus dos Palmares, CE 060, Km 51, 62785-000-Acarape-CE/Brazil}
\email{joaofilho@unilab.edu.br}
\thanks{$^{3}$ Partially supported by CNPq/Brazil}
\keywords{quasi-Einstein metrics, homogeneous manifolds, isometry group, static metrics}
\subjclass[2000]{Primary 53C25, 53C20, 53C21; Secondary 53C65}
\date{January 31, 2013}

\begin{abstract}
The purpose of this article is to study the existence and uniqueness of quasi-Einstein structures on $3$-dimensional homogeneous Riemannian manifolds. To this end, we use the eight model geometries for 3-dimensional manifolds identified by Thurston. First,  we
present here a complete description of quasi-Einstein metrics
on  $3$-dimensional homogeneous manifolds with isometry group of dimension $4.$ In addition, we shall show the
absence of such gradient structure on $Sol^3,$ which has $3$-dimensional isometry group.  Moreover, we prove that Berger's spheres carry a non-trivial quasi-Einstein structure with non gradient associated vector field, this shows that a theorem due to Perelman can not be extend to quasi-Einstein metrics. Finally, we prove that a $3$-dimensional homogeneous manifold carrying a gradient quasi-Einstein structure is either Einstein or $\mathbb{H}^2_{\kappa} \times \mathbb{R}.$
\end{abstract}

\maketitle

\section{Introduction}
One of the motivation to study quasi-Einstein metrics on a Riemannian manifold
$(M^n,\,g)$  is its closed relation with  warped product Einstein metrics,
see e.g. \cite{case}, \cite{csw} and \cite{hepeterwylie}. From notable books as  \cite{besse} and \cite{Barrett} classifying or understanding the geometry of Einstein warped products  is definitely a fruitful problem. Indeed, from \cite{kk} under suitable condition, $m$-quasi-Einstein metrics correspond to exactly
those $n$-dimensional manifolds which are the base of an $(n+m)$-dimensional Einstein
warped product.  For comprehensive references on such a theory, we indicate for instance
\cite{besse}, \cite{csw}, \cite{kk} and \cite{WW}.

One fundamental tool to understand the behavior of such
a class of manifold is the $m$-Bakry-Emery Ricci tensor which is given by
\begin{equation}
\label{bertens}
Ric_{f}^{m}=Ric+\nabla ^2f-\frac{1}{m}df\otimes df,
\end{equation}where $f$ is a smooth function on $M^n$ and $\nabla ^2f$ stands for the Hessian form.

This tensor was extended recently, independently, by Barros and Ribeiro Jr \cite{brG} and Limoncu \cite{limoncu} for an arbitrary vector field $X$ on $M^n$ as follows:
\begin{equation}
\label{eqprincqem}
Ric_{X}^{m}=Ric+\frac{1}{2}\mathcal{L}_{X}{g}-\frac{1}{m}X^{\flat}\otimes X^{\flat},
\end{equation}where $\mathcal{L}_{X}{g}$ and $X^{\flat}$ denote, respectively, the Lie derivative on $M^n$ and the canonical $1$-form associated to $X.$

With this setting we say that $(M^n,\,g)$ is a $m$-quasi-Einstein metric, or simply {\it quasi-Einstein metric}, if there exist a vector field $X\in\mathfrak{X}(M)$ and constants $0<m\leq\infty$ and $\lambda$ such that
\begin{equation}
\label{eqprincqem1}
Ric_{X}^{m}=\lambda g.
\end{equation}

Noticing that the trace of $Ric_{X}^{m}$ is given by $R+div X-\frac{1}{m}|X|^{2}$, where $R$ denotes the scalar curvature, we deduce
\begin{equation}
\label{eqntrace}
R+div X-\frac{1}{m}|X|^{2}=\lambda n.
\end{equation}

On the other hand, when $m$ goes to infinity, equation (\ref{eqprincqem}) reduces to the one associated to a Ricci soliton, for more details in this subject we recommend  the survey due to Cao \cite{Cao} and the references therein. Whereas, when $m$ is a positive integer and $X$ is gradient, it corresponds to warped product Einstein metric, for more details see \cite{kk} and \cite{csw}. Following the terminology of Ricci solitons, a  quasi-Einstein metric $g$ on a manifold $M^n$ will be called \emph{expanding}, \emph{steady} or \emph{shrinking}, respectively, if  $\lambda<0,\,\lambda=0$ or $\lambda>0$.

\begin{definition}
A quasi-Einstein metric will be called \emph{trivial} if $ X\equiv0$. Otherwise, it will be \emph{nontrivial}
\end{definition}

We notice that the triviality implies that $M^n$ is an Einstein manifold. Moreover, it is important to detach that gradient  $1$-quasi-Einstein metrics satisfying $\Delta e^{-f}+\lambda e^{-f}=0$ are more commonly called \emph{static metrics} with cosmological constant $\lambda;$ for more details see e.g. \cite{Anderson}, \cite{AndersonKhuri} and \cite{Corvino}. On the other hand, it is well known that on a compact manifold $M^n$ a gradient $\infty-$quasi-Einstein metric with $\lambda\leq0$ is trivial, see \cite{monte}. The same result was proved in \cite{kk} for gradient $m$-quasi-Einstein metric on compact manifold with $m$ finite. Besides, we known that compact shrinking Ricci solitons have positive scalar curvature, see e.g. \cite{monte}. An extension of this result for shrinking gradient $m$-quasi-Einstein metrics with  $1\leq m<\infty$ was obtained in \cite{csw}. Recently, in \cite{brozosGarciagavino} Brozos-V\'{a}zquez et al. proved that locally conformally flat gradient $m$-quasi-Einstein metrics are globally conformally equivalent to a space form or locally isometric to a Robertson-Walker spacetime or a $pp$-wave. In \cite{hepeterwylie} it was given some classification for $m$-quasi-Einstein metrics where the base has non empty boundary. Moreover, they proved a characterization for $m$-quasi-Einstein metrics when the base is locally conformally flat. We point out that Case et al. in \cite{csw} proved that every compact gradient $m$-quasi-Einstein metric with constant scalar curvature is trivial.  We also recall a classical result due to Perelman \cite{Per} which asserts that every compact Ricci soliton is gradient. It is natural to ask if this result still true for quasi-Einstein metrics.

Here we shall show that Berger's spheres carry naturally  a non trivial structure of quasi-Einstein metrics (cf. Example \ref{ex1qem}). Since they have constant scalar curvature, their associated vector fields can not be gradient. In particular, this shows that Perelman's result is not true for compact quasi Einstein metrics. Moreover, these examples show that Theorem 4.6 of \cite{hepeterwylie} can not be extended for a non gradient vector field. Before to do this, we shall recall some classical informations on 3-dimensional homogeneous manifolds.

It should be emphasized that, classically, the study of quasi-Einstein metric is considered when $X$ is a
gradient of a smooth function $f$ on $M^n,$ which will be the case
considered in this work. Therefore, when quoting the
quasi-Einstein metrics we will be referring to the gradient case. However, we shall here obtain some examples of non gradient quasi-Einstein metrics.

From now on, we consider $M^3$ a simply connected homogeneous manifold. Based in the eight model geometries for 3-dimensional manifolds identified by Thurston  (cf. Chapter 3, section 8 in \cite{thurston}, see also Theorem 5.1 in \cite{peter}) we may classify $M^3$ according to its isometry group $Iso(M^{3}),$ whose dimension can be $3$,
$4$ or $6$, detaching that $6$-dimensional are space forms $\Bbb{R}^3,$ $\Bbb{H}^3$ and $\Bbb{S}^3,$ which are Einstein. But  Einstein structures are well known in dimension
$3,$ see e.g. \cite{besse}. So, it remains to describe quasi-Einstein metrics on simply connected homogeneous spaces
with isometry group of dimension $3$ and $4$.

When its isometry group has dimension $3$ the manifold possesses a geometric structure modeled on Lie group $Sol^{3},$ for more details see Theorem 5.2 of \cite{peter}. More exactly, the space $Sol^{3}$ can be viewed as  $\Bbb{R}^{3}$ endowed with the metric
\begin{equation*}
\label{metricsol3}g_{Sol^{3}} = e^{2t}dx^2 + e^{-2t}dy^2 + dt^2,
\end{equation*}where $(x,y,t)$ are canonical coordinates on $\Bbb{R}^{3}.$ It is important to observe that $Sol^{3}$ has a Lie group structure with respect to which the above metric is left-invariant. For more details about the geometry of  $Sol^{3}$ we recommend Section 2 in \cite{daniel}. Concerning to this manifold we have the next result.

\begin{theorem}
\label{thmsol3} $Sol^{3}$ does not carry any quasi-Einstein structure.
\end{theorem}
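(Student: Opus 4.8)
The plan is to fix the standard left-invariant model metric on $Sol^{3}$, namely $g = e^{2z}dx^{2} + e^{-2z}dy^{2} + dz^{2}$ on $\mathbb{R}^{3}$, and to reduce the gradient equation to a rigid linear PDE system in these global coordinates. First I would set up the orthonormal left-invariant frame $e_{1} = e^{-z}\partial_{x}$, $e_{2} = e^{z}\partial_{y}$, $e_{3} = \partial_{z}$, whose only nonzero brackets are $[e_{1},e_{3}] = e_{1}$ and $[e_{2},e_{3}] = -e_{2}$. From the Koszul formula one computes the Levi-Civita connection ($\nabla_{e_{1}}e_{1} = -e_{3}$, $\nabla_{e_{2}}e_{2} = e_{3}$, $\nabla_{e_{1}}e_{3} = e_{1}$, $\nabla_{e_{2}}e_{3} = -e_{2}$, the rest zero) and then the Ricci tensor, which is diagonal with $\mathrm{Ric} = \mathrm{diag}(0,0,-2)$ and constant scalar curvature $R = -2$. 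In particular $Sol^{3}$ is not Einstein, so the trivial case $X \equiv 0$ is already excluded and it suffices to rule out nonconstant $f$.

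For finite $m$, the key simplification is the substitution $u = e^{-f/m} > 0$, which turns equation \eqref{eqprincqem1} with $X = \nabla f$ into the linear equation
\begin{equation*}
m\,\nabla^{2}u = u\,(\mathrm{Ric} - \lambda g).
\end{equation*}
Using the Ricci computation together with $(\nabla^{2}u)(e_{i},e_{j}) = e_{i}(e_{j}u) - (\nabla_{e_{i}}e_{j})u$, the three off-diagonal equations become, in the coordinates $(x,y,z)$,
\begin{equation*}
u_{xy} = 0, \qquad u_{xz} = u_{x}, \qquad u_{yz} = -u_{y},
\end{equation*}
while the diagonal ones read $m(e^{-2z}u_{xx} + u_{z}) = -\lambda u$, $m(e^{2z}u_{yy} - u_{z}) = -\lambda u$ and $m\,u_{zz} = -(2+\lambda)u$. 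In the steady soliton limit $m = \infty$ one argues identically with $f$ in place of $u$, the resulting system having the same structure.

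Next I would solve this over-determined system. The first-order relations force $u_{x} = F'(x)e^{z}$ and $u_{y} = G'(y)e^{-z}$, i.e. a separated form $u = F(x)e^{z} + G(y)e^{-z} + H(z)$. Differentiating the diagonal equations in $x$ (resp. $y$) and comparing the independent exponential modes $e^{z},e^{-z}$ yields both $(m+2+\lambda)u_{x} = 0$ and $(m+\lambda)u_{x} = 0$ (resp. the same relations for $u_{y}$); subtracting gives $u_{x} = u_{y} = 0$, so $u = u(z)$ depends on $z$ alone. The two mixed diagonal equations then read $m u' = -\lambda u$ and $m u' = \lambda u$, forcing $\lambda = 0$ and $u$ constant, after which $m u'' = -(2+\lambda)u = -2u$ gives $u \equiv 0$, contradicting $u > 0$. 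Hence no gradient $m$-quasi-Einstein structure can exist.

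The routine but unavoidable part is the curvature computation; the conceptual crux is the linearization $u = e^{-f/m}$, which converts the problem into a homogeneous linear PDE whose exponential weights $e^{\pm z}$ are incompatible with the diagonal profile $\mathrm{diag}(0,0,-2)$ of the Ricci tensor. I expect the main obstacle to be the bookkeeping in the final step: one must cleanly separate the genuine $x$- and $y$-dependence from the pure $z$-modes (achieved by differentiating in $x$ and $y$) in order to reduce to $u = u(z)$ and extract the contradiction. This is also precisely where the non-Einstein nature of $Sol^{3}$ — the mismatch between the two vanishing Ricci eigenvalues and the eigenvalue $-2$ — is decisive.
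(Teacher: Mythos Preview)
Your argument is correct and complete. The substitution $u=e^{-f/m}$ indeed turns the gradient equation into $m\,\nabla^{2}u=u(\mathrm{Ric}-\lambda g)$, your Hessian and Ricci computations in the frame $\{e_1,e_2,e_3\}$ are accurate, and the chain of deductions $u_{x}=u_{y}=0 \Rightarrow \lambda=0 \Rightarrow u'=0 \Rightarrow 0=-2u$ is sound. The one place where the exposition is slightly compressed is the phrase ``comparing the independent exponential modes'': what actually happens after differentiating, say, the $(e_1,e_1)$-equation in $x$ is that one obtains $mF'''(x)e^{-z}=-(m+\lambda)F'(x)e^{z}$, whose two sides have different $z$-behaviour and hence both vanish; combined with $(m+2+\lambda)u_{x}=0$ from the $(e_3,e_3)$-equation this gives $u_{x}=0$. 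You clearly have this in mind, and the $m=\infty$ variant goes through verbatim with $f$ in place of $u$.

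Your route is genuinely different from the paper's. The paper never linearizes: it works directly with the nonlinear system for $f$ (their Lemma~\ref{lemsol3a}), solves the separable ODE $\partial_{t}(\partial_{t}f)=\tfrac1m(\partial_{t}f)^{2}+\lambda+2$ to split into the two cases $\partial_{t}f=\pm\sqrt{-m(\lambda+2)}$ and $\partial_{t}f=-\sqrt{-m(\lambda+2)}\tanh[\cdot]$, and then feeds each form back into the remaining equations, eventually isolating contradictions through a rather delicate comparison of $\tanh$-coefficients and a final subcase $\lambda=-m-2$. Your linearization collapses all of this: the $\ln\cosh$ branch never appears because the exponential change of variable absorbs the quadratic term in $\nabla f$, and the separation of variables is immediate. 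The trade-off is that the paper's framework (Lemma~\ref{lemsol3a}) is written for an arbitrary vector field $X$, so it also serves their non-gradient discussion and the parallel analysis on $E^{3}(\kappa,\tau)$, whereas your method is specific to the gradient case where the substitution $u=e^{-f/m}$ is available.
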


Proceeding, we treat of 3-dimensional homogeneous manifolds $M^3$ with isometry group of dimension 4. In this case, such a manifold is a Riemannian  fibration onto a $2$-dimensional space form $\mathbb{N}^{2}_{\kappa}$ with constant sectional  curvature $\kappa$. In other words, there is a Riemannian
submersion $\pi\colon M^3 \to \mathbb{N}^{2}_{\kappa}$ with fibers diffeomorphic either to $\mathbb{S}^1$  or to  $\mathbb{R}$, depending whether $M^3$ is compact or not. One remarkable propriety of the vector field $E_3$ tangent to the fibers is that it is a Killing vector field  for which  $\nabla_XE_3 = \tau X \times E_3$ for all $X\in\mathfrak{X}(M)$, where $\tau$ is a constant, called curvature of the bundle, while $\times$ means cross product.

Based on Thruston's classification \cite{thurston} (see also Section 5 in  \cite{peter}), it is well-known that if $M^3$ is non compact it has isometry group of one of the following Riemannian manifolds:
\begin{eqnarray*}
\left \{ \begin{array}{lll}
                     \mathbb{S}^2_{\kappa} \times \mathbb{R},\,\hbox{when}\, \ \kappa > 0,\, \tau = 0,  \\
                    \mathbb{H}^2_{\kappa} \times \mathbb{R},\,\hbox{when}\, \ \kappa < 0 ,\, \tau = 0, \\
                     Nil_{3}(\kappa, \tau),\,\hbox{when}\,  \kappa = 0 ,\, \tau \neq 0,  \\
                    \widetilde{PSl_{2}}(\kappa, \tau),\,\hbox{when}\, \ \kappa < 0 ,\, \tau \neq 0,

                  \end{array} \right.
\end{eqnarray*} where each one of these manifolds can be viewed as $\Bbb{R}^3$ endowed with the following Riemannian metric
\begin{eqnarray*}
g=g_{\kappa,\tau} = \left \{ \begin{array}{ll}
                dx^2+dy^2+[\tau(xdy-ydx)+dt]^2 , \,  \kappa = 0\\
                \rho^2(dx^2+dy^2)+\left[2\kappa\tau \rho \left(ydx-
                xdy\right)+dt\right]^2, \,  \kappa \neq 0.
                \end{array} \right.
\end{eqnarray*} We highlight that $Nil_{3}(\kappa, \tau)$ stands for the classical Heisenberg's space in exponential coordinates. Moreover, $\widetilde{PSl_{2}}$ is the universal cover of the Lie group $PSl_{2}$ endowed with the left-invariant Riemannian metric $g_{\kappa,\tau}.$ Otherwise, when $M^3$ are compact  ($\tau\neq 0$ and $\kappa>0$), these manifolds are fibration over a round sphere. In this case, they have isometry group of Berger's sphere, which will be denoted by $\mathbb{S}^{3}_{\kappa,\tau}.$ In other words, Berger's sphere is a standard $3$-dimensional sphere $\mathbb{S}^3$ endowed with the family of metrics
\begin{equation*}
g_{\kappa,\tau}(X,Y)=\frac{4}{\kappa}\Big[\langle X, Y\rangle + \Big(\frac{4\tau^{2}}{\kappa}-1 \Big)\langle X, V\rangle \langle Y, V\rangle\Big],
\end{equation*}
where $\langle\, , \,\rangle$ stands for the round metric on $\Bbb{S}^{3},\,V_{(z,w)}=(iz,iw)$ for each $(z,w)\in \Bbb{S}^{3}$ and $\kappa,\,\tau$ are real numbers with $\kappa>0$ and $\tau\neq0.$  For comprehensive references on such a classification, we indicate to reader \cite{thurston}, \cite{peter} (Section 5) and \cite{daniel2} (Section 2).

On the other hand, we shall show in Lemma \ref{keylemma} of Section \ref{preliminares} that for a $3$-dimensional homogeneous Riemannian manifold whose group of isometries has dimension $4$ its Ricci tensor satisfies
\begin{equation}
\label{ricc1}
Ric -(4\tau^2-\kappa)E_{3}^{\flat}\otimes E_{3}^{\flat}= (\kappa-2\tau^2)g.
\end{equation}Now taking into account that $E_{3}$ is a Killing vector field, if the vector field  $X = \sqrt{m(4\tau^2-\kappa)}\,E_3$ is well defined, then we have  $\frac{1}{2}\mathcal{L}_{X}g=0$. Whence, letting $\lambda=\kappa-2\tau^2$  we obtain
\begin{equation*}
Ric+\frac{1}{2}\mathcal{L}_{X}g -\frac{1}{m}X^{\flat}\otimes
X^{\flat}= \lambda g,
\end{equation*}which gives the next example:

\begin{example}
\label{ex1qem}Let $(M^3, g_{\kappa,\tau})$  be a $3$-dimensional homogeneous Riemannian manifolds with $4$-dimensional isometry
group such that $X = \sqrt{m(4\tau^2-\kappa)}\,E_3$ is well defined. Letting $\lambda=\kappa-2\tau^2$  we deduce that
$(M^3, g_{\kappa,\tau}, X,\lambda)$ is a $m$-quasi-Einstein metric. We notice that in this case $X$ is not necessarily gradient type.
\end{example}

We point out that if $\lambda>0$ we can prove by a similar argument used in \cite{fr} and \cite{wylie} that
$\big(M^3,g\big)$ is compact. On the other hand, for $\mathbb{S}^2_{\kappa} \times \mathbb{R}$ we have
$4\tau^{2}<\kappa,$ therefore, in the previous example we must have $\big(M^3, g\big)\neq\mathbb{S}_{\kappa}^{2}
\times \mathbb{R}.$  Moreover, we shall show that $\Bbb{H}_{\kappa}^{2}\times \Bbb{R}$ is the unique case for which the
associated vector field is gradient, more precisely, $X$ is the gradient of $f$ given according to Examples \ref{ex1show} and  \ref{ex2show}. In the others cases the associated vector fields are non gradient. Whence, we present  the first
examples of compact and non compact $m$-quasi-Einstein metrics with non gradient vector field making sense the general definition
(\ref{eqprincqem}) of \cite{brG}.

Concerning to Berger's sphere we detach that they admit \emph{shrinking}, \emph{expanding} and \emph{steady} non gradient
$m$-quasi-Einstein metrics, since $\lambda = \kappa-2\tau^{2}$ can assume any sign.

Proceeding it is important to detach that on $\mathbb{H}^2_{\kappa} \times \mathbb{R}$ we have two examples of gradient quasi-Einstein structure. First, we have the following example for a Killing vector field.
\begin{example}
\label{ex1show}
 We consider $\mathbb{H}^2_{\kappa} \times \mathbb{R}$ with its
standard metric and the potential function
$f(x,y,t)=\pm\sqrt{-m\kappa}t+c,$ where $c$ is a constant. It is easy to see that $\nabla
f=\pm\sqrt{-m\kappa}\partial_{t}$, hence $Hess\,f=0.$ Therefore
$(\mathbb{H}_{\kappa}^{2}\times\mathbb{R},\,\nabla f,\,\kappa))$ is a
quasi-Einstein metric.
\end{example}

Next we shall describe our second example to $\mathbb{H}^2_{\kappa} \times \mathbb{R}$, where its
associated vector field is not Killing vector field.
\begin{example}
\label{ex2show} We consider $\mathbb{H}^2_{\kappa} \times \mathbb{R}$ with its
standard metric and the potential function
$f(x,y,t)=-m\ln\cosh\Big[\sqrt{-\frac{\kappa}{m}}(t+a)\Big]+b,$ where $a$ and $b$ are constants. Under these conditions
$(\mathbb{H}^2_{\kappa} \times \mathbb{R},\,\nabla f,\, \kappa)$ is a
quasi-Einstein metric.
\end{example}

Now, it is natural to ask what are the $m$-quasi-Einstein metrics on $M^3$? In fact, those presented in Examples
\ref{ex1show} and  \ref{ex2show} are unique for gradient non compact case. Therefore, we deduce the following uniqueness theorem.
\begin{theorem}
\label{thm2qem} Let $(M^3,\,g,\,\nabla f,\,\lambda)$ be a $3$-dimensional homogeneous quasi-Einstein structure. Then this structure is either Einstein or it corresponds to one of the previous structures given on $\mathbb{H}^2_{\kappa} \times\mathbb{R}$. In particular, $g$ is a static metric provided $m=1.$
\end{theorem}

As a consequence of Theorem \ref{thm2qem} we shall derive the following corollary.

\begin{corollary}
\label{corS2R} $\mathbb{S}^2_{\kappa} \times\mathbb{R},$ $Nil_{3}(\kappa, \tau)$  and $\widetilde{PSl_{2}}(\kappa, \tau)$ do not carry a quasi-Einstein structure.
\end{corollary}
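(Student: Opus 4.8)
The plan is to derive Corollary~\ref{corS2R} directly from the dichotomy of Theorem~\ref{thm2qem}. Suppose, for contradiction, that one of the three manifolds $\mathbb{S}^2_{\kappa} \times\mathbb{R}$, $Nil_{3}(\kappa, \tau)$ or $\widetilde{PSl_{2}}(\kappa, \tau)$ admits a gradient quasi-Einstein structure $(g,\nabla f,\lambda)$. Since each of these is a homogeneous $E^{3}(\kappa,\tau)$, Theorem~\ref{thm2qem} forces the structure into exactly one of two situations: either $g$ is Einstein, or the manifold is $\mathbb{H}^2_{\kappa} \times\mathbb{R}$ carrying one of the potentials of Example~\ref{ex1show} or Example~\ref{ex2show}. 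I would eliminate both alternatives for each of the three spaces, which yields the contradiction.

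The second alternative is discarded immediately by the classification list for $E^{3}(\kappa,\tau)$. There $\mathbb{H}^2_{\kappa} \times\mathbb{R}$ is characterized by $\kappa<0$ and $\tau=0$, whereas $\mathbb{S}^2_{\kappa} \times\mathbb{R}$ has $\kappa>0,\ \tau=0$, $Nil_{3}(\kappa,\tau)$ has $\kappa=0,\ \tau\neq0$, and $\widetilde{PSl_{2}}(\kappa,\tau)$ has $\kappa<0,\ \tau\neq0$. As these parameter regimes are mutually exclusive, none of the three manifolds can be $\mathbb{H}^2_{\kappa} \times\mathbb{R}$.

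To exclude the first alternative I would invoke Lemma~\ref{keylemma}, rewritten as
\begin{equation*}
Ric = (\kappa-2\tau^2)\,g + (4\tau^2-\kappa)\,E_{3}^{\flat}\otimes E_{3}^{\flat}.
\end{equation*}
Thus $g$ is Einstein, say $Ric=c\,g$, precisely when $(4\tau^2-\kappa)\,E_{3}^{\flat}\otimes E_{3}^{\flat}=(c-\kappa+2\tau^2)\,g$; since the left-hand side has rank at most one while $g$ has rank three, both sides must vanish, forcing $\kappa=4\tau^2$. Inspecting the three cases gives $4\tau^2=0<\kappa$ for $\mathbb{S}^2_{\kappa} \times\mathbb{R}$, $4\tau^2>0=\kappa$ for $Nil_{3}(\kappa,\tau)$, and $4\tau^2>0>\kappa$ for $\widetilde{PSl_{2}}(\kappa,\tau)$, so in every case $\kappa\neq4\tau^2$ and none of them is Einstein.

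Having ruled out both branches of the dichotomy, the supposed structure cannot exist, which is the assertion of the corollary. The only step needing a little care, and the nearest thing to an obstacle, is the Einstein test: one must observe that the correction term in Lemma~\ref{keylemma} is genuinely of rank one (equivalently, $E_{3}^{\flat}\otimes E_{3}^{\flat}$ is not proportional to $g$ in dimension three) so that it truly obstructs proportionality with $g$. I note that this same computation also disposes of any trivial structure $\nabla f\equiv 0$, since triviality already requires $g$ to be Einstein, which the three spaces are not.
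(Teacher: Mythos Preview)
Your argument is correct and is exactly the intended deduction: the paper does not write out a separate proof of the corollary, treating it as an immediate consequence of Theorem~\ref{thm2qem}, and your two-step elimination (not $\mathbb{H}^2_{\kappa}\times\mathbb{R}$ by the parameter list, not Einstein because $\kappa\neq 4\tau^2$ via Lemma~\ref{keylemma}) is precisely how that consequence is read off. Your remark that the rank-one term $E_3^{\flat}\otimes E_3^{\flat}$ cannot be proportional to $g$ is the only point that needs saying, and you handle it cleanly.
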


\section{Preliminaries}
\label{preliminares}
 In this section we shall develop a few tools concerning to $3$-dimensional homogeneous manifolds according to the dimension of their isometry group in order to prove our results. For comprehensive references on such a theory, we indicate for instance Thurston's book \cite{thurston}, \cite{peter} and \cite{daniel2}.

 \subsection{$3$-dimensional homogeneous manifold with isometry group of dimension $3$}
As it was previously mentioned $3$-dimensional homogeneous manifold with isometry group of dimension $3$ possesses a geometric structure modeled on the Lie group $Sol^{3}.$ In particular, we may consider $Sol^{3}$ as  $\Bbb{R}^{3}$ endowed with the metric
\begin{equation}
\label{metricsol3}g_{Sol^{3}} = e^{2t}dx^2 + e^{-2t}dy^2 + dt^2.
\end{equation}
Whence, we can check directly that the next set gives  an orthonormal frame on ${Sol}^3$.
\begin{equation}
\label{orthsol3} \{E_1 = e^{-t}\partial_x,\, E_2 =
e^{t}\partial_y,\, E_3 = \partial_t\}.
\end{equation}
By using this frame we obtain the next lemma.
\begin{lemma}
\label{lemconsol3} Let us consider on $Sol^{3}$ the metric and the frame  given, respectively, by (\ref{metricsol3}) and  (\ref{orthsol3}). Then its Riemannian connection $\nabla$ obeys the rules:

\begin{equation}\label{consol3}
                \Bigg \{\begin{array}{lll}
                \nabla_{E_1}E_1 = -E_{3}&  \nabla_{E_1}E_2 =0&  \nabla_{E_1}E_3 = E_{1}\\
                \nabla_{E_2}E_1 = 0&  \nabla_{E_2}E_2 = E_{3} &
                 \nabla_{E_2}E_3 = -E_{2} \\
                \nabla_{E_3}E_1 = 0 & \nabla_{E_3}E_2 = 0 &
                 \nabla_{E_3}E_3 = 0.
              \end{array}  \\
\end{equation}
Moreover, the Lie brackets satisfy:
\begin{equation}
\label{liebrsol}[E_{1},E_{2}]=0, [E_{1},E_{3}]=E_{1} \,
\text{and}\,  [E_{2},E_{3}]=-E_{2}.
\end{equation}
\end{lemma}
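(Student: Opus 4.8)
The plan is entirely computational: both assertions of the lemma are read off from the explicit expressions for the frame in (\ref{orthsol3}) together with the single structural fact that this frame is orthonormal for the metric (\ref{metricsol3}). First I would establish the bracket relations (\ref{liebrsol}). Writing $E_1 = e^{-t}\partial_x$, $E_2 = e^{t}\partial_y$, $E_3 = \partial_t$ and using that the coordinate fields $\partial_x,\partial_y,\partial_t$ commute, each bracket is computed by letting it act on an arbitrary smooth test function. Since the coefficient $e^{-t}$ of $E_1$ is independent of $x$ and the coefficient $e^{t}$ of $E_2$ is independent of $y$, one finds $[E_1,E_2]=0$ immediately. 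For the other two brackets only the $t$-derivative in $E_3$ contributes, hitting the exponential coefficients; explicitly $[E_1,E_3]f = e^{-t}\partial_x\partial_t f-\partial_t\big(e^{-t}\partial_x f\big)=e^{-t}\partial_x f = E_1 f$, and likewise $[E_2,E_3]f=-e^{t}\partial_y f=-E_2 f$, which yields (\ref{liebrsol}).

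For the connection I would invoke the Koszul formula. Because $\{E_1,E_2,E_3\}$ is orthonormal, all inner products $g(E_i,E_j)=\delta_{ij}$ are constant, so the three directional-derivative terms in Koszul's identity vanish and it collapses to the purely algebraic expression
\[
2\,g(\nabla_{E_i}E_j,E_k)=g([E_i,E_j],E_k)-g([E_i,E_k],E_j)-g([E_j,E_k],E_i).
\]
The right-hand side depends only on the structure constants already recorded in (\ref{liebrsol}). Substituting these brackets, together with the antisymmetry $[E_i,E_j]=-[E_j,E_i]$, and running over the nine ordered pairs $(i,j)$ produces each of the covariant derivatives in (\ref{consol3}); for instance taking $i=j=1,\,k=3$ gives $2\,g(\nabla_{E_1}E_1,E_3)=-2\,g([E_1,E_3],E_1)=-2$, whence $\nabla_{E_1}E_1=-E_3$, and the remaining entries follow the same way.

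The \emph{only} difficulty here is bookkeeping: one must track the three terms for each of the nine pairs and be careful with the sign convention in Koszul's formula and with the antisymmetry of the brackets. There is no conceptual obstacle, since torsion-freeness and metric compatibility are built into the Koszul identity and require no separate check. (Alternatively one could compute the Christoffel symbols of (\ref{metricsol3}) in the coordinate frame $\{\partial_x,\partial_y,\partial_t\}$ and then change basis to $\{E_1,E_2,E_3\}$, but the orthonormal-frame route above is shorter and makes the vanishing of the metric terms transparent.)
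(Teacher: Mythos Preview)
Your proposal is correct and is precisely the standard computation one carries out here; the paper in fact states this lemma without proof, treating it as routine. Computing the brackets first from the coordinate expressions and then feeding them into the Koszul formula (with the metric terms dropping out by orthonormality) is exactly how one fills in the omitted details, and your sample computation $\nabla_{E_1}E_1=-E_3$ checks out.
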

Next we use this lemma in order to compute the Ricci tensor of $Sol^3.$ More exactly, we have.

\begin{lemma}
\label{ricsol3}
The Ricci tensor of $Sol^3$ is given by $Ric=-2 E_{3}^{\flat}\otimes
E_{3}^{\flat}.$
\end{lemma}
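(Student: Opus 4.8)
The plan is to extract the full Riemannian curvature tensor directly from the connection rules of \lemref{lemconsol3} and then contract to obtain $Ric$. Because $\{E_1,E_2,E_3\}$ is orthonormal, it suffices to evaluate the Ricci tensor on pairs of frame vectors, and for the diagonal entries the computation reduces to summing sectional curvatures: since the frame is orthonormal, $Ric(E_i,E_i)=\sum_{j\neq i}\langle R(E_i,E_j)E_j,E_i\rangle=\sum_{j\neq i}K_{ij}$, where $K_{ij}$ denotes the sectional curvature of the plane spanned by $E_i$ and $E_j$.

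First I would use the curvature operator in the form $R(X,Y)Z=\nabla_X\nabla_Y Z-\nabla_Y\nabla_X Z-\nabla_{[X,Y]}Z$ and feed in the covariant derivatives from (\ref{consol3}) together with the brackets (\ref{liebrsol}), namely $[E_1,E_2]=0$, $[E_1,E_3]=E_1$ and $[E_2,E_3]=-E_2$. A short computation of the three relevant components yields $R(E_1,E_2)E_2=E_1$, $R(E_1,E_3)E_3=-E_1$ and $R(E_2,E_3)E_3=-E_2$, so that the sectional curvatures are $K_{12}=1$ and $K_{13}=K_{23}=-1$. Summing these along the frame then gives $Ric(E_1,E_1)=K_{12}+K_{13}=0$, $Ric(E_2,E_2)=K_{12}+K_{23}=0$ and $Ric(E_3,E_3)=K_{13}+K_{23}=-2$, which already matches the diagonal of the claimed tensor $-2\,E_3^{\flat}\otimes E_3^{\flat}$.

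To finish I would verify that the off-diagonal entries vanish, i.e.\ $Ric(E_i,E_j)=\sum_k\langle R(E_i,E_k)E_k,E_j\rangle=0$ for $i\neq j$; here one must keep track of a few more curvature components, but the skew-symmetry of $R$ and the structure of (\ref{consol3}) force each such sum to cancel. Having established $Ric(E_i,E_j)=-2\,\delta_{i3}\delta_{j3}$ on an orthonormal frame, tensoriality gives $Ric=-2\,E_3^{\flat}\otimes E_3^{\flat}$. The only delicate point is the bookkeeping of the $\nabla_{[E_i,E_j]}Z$ terms—these are nonzero precisely for the pairs involving $E_3$, and it is exactly their contribution that produces the curvature asymmetry singling out the $E_3$ direction—so I expect the careful handling of those bracket terms and the off-diagonal cancellations to be the main place where errors could creep in.
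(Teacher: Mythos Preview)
Your argument is correct and is essentially the same as the paper's: both compute the Ricci entries on the orthonormal frame by plugging the connection data and brackets from \lemref{lemconsol3} into the curvature formula, obtaining $Ric(E_1,E_1)=Ric(E_2,E_2)=0$, $Ric(E_3,E_3)=-2$, and vanishing off-diagonal terms. The only cosmetic difference is that you first isolate the sectional curvatures $K_{12}=1$, $K_{13}=K_{23}=-1$ and then sum, whereas the paper evaluates each $Ric(E_i,E_j)$ in one shot.
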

\begin{proof}Computing $Ric (E_1, E_1)$ with the aid of (\ref{consol3}) and (\ref{liebrsol}) we obtain
\begin{eqnarray*}
Ric (E_1, E_1) &=& \langle\nabla_{E_{2}} \nabla_{E_{1}}E_{1}-\nabla_{E_{1}} \nabla_{E_{2}}E_{1}+\nabla_{[E_{1},E_{2}]} E_{1},E_{2}\rangle\\& + & \langle\nabla_{E_{3}} \nabla_{E_{1}}E_{1}-\nabla_{E_{1}} \nabla_{E_{3}}E_{1}+\nabla_{[E_{1},E_{3}]} E_{1},E_{3}\rangle \\
 &=& \langle-\nabla_{E_{2}} E_{3},E_{2}\rangle+\langle\nabla_{E_{1}} E_{1},E_{3}\rangle =0.\\
\end{eqnarray*}
In a similar way we show that  $Ric (E_i, E_j)=0$, for $i\ne j$ as well as $i=j=2.$ Finally, we have
\begin{eqnarray*}
Ric (E_3, E_3) &=&\langle\nabla_{E_{1}} \nabla_{E_{3}}E_{3}-\nabla_{E_{3}} \nabla_{E_{1}}E_{3}+\nabla_{[E_{3},E_{1}]} E_{3},E_{1}\rangle  \\& + & \langle\nabla_{E_{2}} \nabla_{E_{3}}E_{3}-\nabla_{E_{3}} \nabla_{E_{2}}E_{3}+\nabla_{[E_{3},E_{2}]} E_{3},E_{2}\rangle\\
 &=&\langle\nabla_{[E_{3},E_{1}]} E_{3},E_{1}\rangle  + \langle\nabla_{[E_{3},E_{2}]} E_{3},E_{2}\rangle=-2,\\
\end{eqnarray*}which completes the proof of the lemma.
\end{proof}

Proceeding we have the following lemma for $Sol^{3}$.
\begin{lemma}
\label{lemsol3a}
Suppose that  $\big(Sol^3,\,g,\,X,\,\lambda\big)$ carries  a $m$-quasi-Einstein metric. Then the following statements hold:
\begin{enumerate}
\item $E_{1}\langle X,E_{1}\rangle+\langle X,E_{3}\rangle=\frac{1}{m}\langle X,E_{1}\rangle^2 +\lambda.$
\item $E_{2}\langle X, E_{2}\rangle-\langle X, E_{3}\rangle=\frac{1}{m}\langle X,E_{2}\rangle^2+\lambda.$
\item $E_{3}\langle X, E_{3}\rangle=\frac{1}{m}\langle X, E_{3}\rangle^2+\lambda+2.$
\item $E_{2}\langle X,E_{1}\rangle+E_{1}\langle X,E_{2}\rangle=\frac{2}{m}\langle X,E_{1}\rangle\langle X,E_{2}\rangle.$
\item $E_{3}\langle X,E_{1}\rangle+E_{1}\langle X,E_{3}\rangle-\langle X, E_{1}\rangle=\frac{2}{m}^\langle X, E_{1}\rangle\langle X, E_{3}\rangle.$
\item $E_{3}\langle X,E_{2}\rangle+E_{2}\langle X, E_{3}\rangle+\langle X,E_{2}\rangle=\frac{2}{m}\langle X,E_{2}\rangle\langle X,E_{3}\rangle.$
\end{enumerate}
\end{lemma}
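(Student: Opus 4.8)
The plan is to feed the orthonormal frame $\{E_1,E_2,E_3\}$ of (\ref{orthsol3}) into the defining equation (\ref{eqprincqem1}), using the two preceding lemmas. Writing $X=\sum_{k}\langle X,E_k\rangle E_k$, the three ingredients to evaluate on a pair $(E_i,E_j)$ are: the Ricci term, which by Lemma \ref{ricsol3} equals $-2$ when $i=j=3$ and vanishes otherwise; the penalty term $\frac{1}{m}X^{\flat}\otimes X^{\flat}(E_i,E_j)=\frac{1}{m}\langle X,E_i\rangle\langle X,E_j\rangle$; and the Lie derivative term, for which I would use the identity
\begin{equation*}
\tfrac{1}{2}\mathcal{L}_{X}g(E_i,E_j)=\tfrac{1}{2}\big(\langle\nabla_{E_i}X,E_j\rangle+\langle\nabla_{E_j}X,E_i\rangle\big).
\end{equation*}

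To compute $\langle\nabla_{E_i}X,E_j\rangle$, I would expand $\nabla_{E_i}X=\sum_k \big(E_i\langle X,E_k\rangle\big)E_k+\sum_k\langle X,E_k\rangle\,\nabla_{E_i}E_k$, so that
\begin{equation*}
\langle\nabla_{E_i}X,E_j\rangle=E_i\langle X,E_j\rangle+\sum_k\langle X,E_k\rangle\langle\nabla_{E_i}E_k,E_j\rangle,
\end{equation*}
and then read the coefficients $\langle\nabla_{E_i}E_k,E_j\rangle$ directly off the connection table (\ref{consol3}). The key simplification is that the last row of (\ref{consol3}) vanishes, i.e. $\nabla_{E_3}E_k=0$ for all $k$, so every term differentiating in the $E_3$-direction collapses to $E_3\langle X,E_j\rangle$.

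Carrying this out pair by pair yields the six equations. The diagonal pairs $(1,1),(2,2),(3,3)$ give statements (1)--(3): for instance $(3,3)$ contributes $Ric(E_3,E_3)=-2$ together with $\tfrac12\mathcal{L}_Xg(E_3,E_3)=E_3\langle X,E_3\rangle$, and moving the penalty term across reproduces (3), the summand $+2$ being exactly the $-2$ from the Ricci tensor. The off-diagonal pairs $(1,2),(1,3),(2,3)$ give statements (4)--(6); here the right-hand side $\lambda g(E_i,E_j)$ vanishes and the factor $2$ appears after clearing the $\tfrac12$ in the symmetrized Lie derivative. The only asymmetry to track is the sign of the terms $\pm\langle X,E_3\rangle$ in (1)--(2) and $\mp\langle X,E_i\rangle$ in (5)--(6); these originate in the opposite signs between $\nabla_{E_1}E_1=-E_3,\ \nabla_{E_1}E_3=E_1$ on one side and $\nabla_{E_2}E_2=E_3,\ \nabla_{E_2}E_3=-E_2$ on the other. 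I do not anticipate any genuine obstacle here: the statement is a direct transcription of (\ref{eqprincqem1}) in the chosen frame, and the only point demanding care is the bookkeeping of the Lie derivative term, which falls out mechanically once $\nabla_{E_3}E_k=0$ and the signs in (\ref{consol3}) are respected.
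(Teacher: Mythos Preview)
Your proposal is correct and follows essentially the same approach as the paper: evaluate the defining equation $Ric_X^m(E_i,E_j)=\lambda\delta_{ij}$ in the frame $\{E_1,E_2,E_3\}$, using Lemma~\ref{ricsol3} for the Ricci term and the connection table (\ref{consol3}) to expand the Lie derivative. The paper carries out only the case $(i,j)=(1,1)$ explicitly and leaves the rest to the reader, so your write-up is in fact somewhat more detailed.
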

\begin{proof}Firstly, computing $Ric_{X}^{m}(E_1,E_1)$ in equation (\ref{eqprincqem}), we obtain
$$Ric(E_{1},E_{1})+\langle\nabla_{E_{1}}X,E_{1}\rangle-\frac{1}{m}\langle
X,E_{1}\rangle^{2}=\lambda.$$ We may use Lemma \ref{ricsol3} to deduce $Ric(E_{1},E_{1})=0,$ thus, since $\nabla_{E_1}E_1 = -E_{3},$ we obtain the first assertion.

The other ones are obtained by the same way. Computing $Ric_{X}^{m}(E_2,E_2),$ $Ric_{X}^{m}(E_3,E_3),$ $Ric_{X}^{m}(E_2,E_1),$ $Ric_{X}^{m}(E_3,E_1)$  and $Ric_{X}^{m}(E_3,E_2)$ we arrive at (2), (3), (4), (5) and (6). We left their check for the reader. So, we finishes the proof of the lemma.
\end{proof}

\subsubsection{Proof of Theorem \ref{thmsol3}}
\begin{proof}Let us suppose the existence of a gradient quasi-Einstein structure on  $Sol^{3}.$ From item (3) of Lemma \ref{lemsol3a} we have
\begin{equation}
\label{eq1pthmsol3}
\partial_{t}\langle \nabla f,\partial_{t}\rangle=\frac{1}{m}\langle \nabla f, \partial_{t}\rangle^2+\lambda+2,
\end{equation}
where $f$ is the potential function. Under this condition (\ref{eq1pthmsol3}) is a separable ODE and we may use the differentiability of $\langle \nabla f, \partial_{t}\rangle$ to obtain the solutions $\langle \nabla f, \partial_{t}\rangle=\pm\sqrt{-m(\lambda+2)}$ and $\langle \nabla f, \partial_{t}\rangle=-\sqrt{-m(\lambda+2)}\tanh\big[\sqrt{-\frac{\lambda+2}{m}}(\psi+t)\big],$ where $\psi$ is a function that does not depend of variable $t.$ Therefore, we may say that the potential function is either $f=\varphi\pm\sqrt{-m(\lambda+2)}t$ or $f=\varphi-m\ln\cosh\big[\sqrt{-\frac{\lambda+2}{m}}(\psi+t)\big],$ where $\varphi$ does not depend on $t.$ Now, we divide our proof in two cases.

Our first case is when $f=\varphi\pm\sqrt{-m(\lambda+2)}t.$ In this case, we can admit without loss of generality that $f=\varphi+\sqrt{-m(\lambda+2)}t.$ Thus,  item (5) of Lemma \ref{lemsol3a} gives $E_{1}(f)=0,$ which implies from the first item of Lemma \ref{lemsol3a} that $0\leq\sqrt{-\frac{\lambda+2}{m}}=\lambda\leq-2,$ giving a contradiction. This proves the first part of the proof.

Otherwise, if  $f=\varphi-m\ln\cosh\big[\sqrt{-\frac{\lambda+2}{m}}(\psi+t)\big],$ we may substitute $f$ in equation (5) of Lemma \ref{lemsol3a} to obtain
\begin{equation}
\label{eq2pthmsol3}
\partial_{xt}^2 f=\Big(\frac{1}{m}\partial_{t}f+1\Big)\partial_{x}f.
\end{equation}
On the other hand, we have
\begin{equation}
\partial_{xt}^2 f=(\lambda+2)\partial_{x}\psi\hspace{0,05cm}\mbox{sech}^2\Big[\sqrt{-\frac{\lambda+2}{m}}(\psi+t)\Big],
\end{equation}
\begin{equation}
\partial_{x}f=\partial_{x}\varphi-\sqrt{-m(\lambda+2)}\partial_{x}\psi\hspace{0,01cm}\tanh\Big[\sqrt{-\frac{\lambda+2}{m}}(\psi+t)\Big]
\end{equation}and
\begin{equation}
\partial_{t}f=-\sqrt{-m(\lambda+2)}\hspace{0,01cm}\tanh\Big[\sqrt{-\frac{\lambda+2}{m}}(\psi+t)\Big].
\end{equation}
Now, we substitute the last three equations in (\ref{eq2pthmsol3}) to arrive at

\begin{equation}
\sqrt{m}\big(\partial_{x}\varphi-(\lambda+2)\partial_{x}\psi\big)=\sqrt{-(\lambda+2)}\big(\partial_{x}\varphi+m\partial_{x}\psi\big)\tanh\big[\sqrt{-\frac{\lambda+2}{m}}(\psi+t)\big].
\end{equation}
From what it follows that
\begin{equation}
\label{1234}
\partial_{x}\varphi-(\lambda+2)\partial_{x}\psi=0
\end{equation}and

\begin{equation}
\label{123}
\partial_{x}\varphi+m\partial_{x}\psi=0.
\end{equation}

In a similar way we may substitute $f$ in item (6) of Lemma \ref{lemsol3a} to obtain
\begin{equation}
\partial_{yt}^2 f=\Big(\frac{1}{m}\partial_{t}f-1\Big)\partial_{y}f.
\end{equation}
On the other hand, it easy to see that
\begin{equation}
\label{e1psol}
\partial_{yt}^2f = (\lambda+2)\partial_y\psi\hspace{0,05cm}\mbox{sech}^2\hspace{-0,1cm}\left[\sqrt{-\frac{\lambda+2}{m}}(\psi+t)\right],
\end{equation}

\begin{equation}
\label{e2psol}
\partial_{y}f = \partial_y\varphi-\sqrt{-m(\lambda+2)}\partial_y\psi\hspace{0,01cm}\tanh\left[\sqrt{-\frac{\lambda+2}{m}}(\psi+t)\right]
\end{equation}
and
\begin{equation}
\label{e3psol}
\partial_{t}f = -\sqrt{-m(\lambda+2)}\tanh\left[\sqrt{-\frac{\lambda+2}{m}}(\psi+t)\right].
\end{equation}
Therefore, substituting (\ref{e1psol}), (\ref{e2psol}) and (\ref{e3psol}) in item (4) of Lemma \ref{lemsol3a} we have
$$\sqrt{m}\hspace{0,05cm}[\partial_y\varphi+(\lambda+2)\partial_y\psi] = - \sqrt{-(\lambda+2)}[\partial_y\varphi-m\partial_y\psi]\tanh\left[\sqrt{-\frac{\lambda+2}{m}}(\psi+t)\right],$$ which implies
\begin{eqnarray}\label{T2.1/5}
\partial_y\varphi+(\lambda+2)\partial_y\psi = 0
\end{eqnarray}
and
\begin{eqnarray}\label{T2.1/6}
\partial_y\varphi-m\partial_y\psi = 0.
\end{eqnarray}

Now, we consider $\lambda\neq-m-2,$ therefore, from (\ref{1234}), (\ref{123}), (\ref{T2.1/5}) and (\ref{T2.1/6}) we have $$\partial_x\varphi = \partial_y\varphi = \partial_x\psi = \partial_y\psi = 0,$$ which implies that $f$ does not depend on $x$ and $y.$ Moreover, by using items (1) and (2) of Lemma \ref{lemsol3a} we conclude that  $f$ does not depend on $t$ and then $f$ is constant, which is a contradiction.

Since $\lambda=-m-2,$ we may derive item (1) of Lemma \ref{lemsol3a} with respect to $t,$ item (5) with respect to $x$ and we compare the results to arrive at
\begin{equation}
\label{eq1sol123}
(\partial_tf+m)\partial_{xx}^2f - \partial_xf\partial_{xt}^2f = -m[2(\partial_tf-\lambda)+\partial_{tt}^2f]e^{2t}.
\end{equation}
Next, we substitute item (5) of Lemma \ref{lemsol3a} in (\ref{eq1sol123}) to obtain
$$(\partial_tf+m)\left[\partial_{xx}^2f - \frac{1}{m}(\partial_xf)^2\right] = -m[2(\partial_tf-\lambda)+\partial_{tt}^2f]e^{2t},$$
thus we may use again item (1) of Lemma \ref{lemsol3a} to conclude $$(\partial_tf-m)(\partial_tf-\lambda) = m\partial_{tt}^2f.$$ Finally, it suffices to use item (3) of Lemma \ref{lemsol3a} to obtain $\partial_{t}f=m,$ which is a contradiction. So, we finishe the proof of the theorem.

\end{proof}

\subsection{Non compact $3$-dimensional homogeneous manifold with isometry group of dimension $4$}
We recall that the projection $\pi : M^3 \rightarrow N_{\kappa}^{2}$, given by $\pi(x, y, t) = (x, y)$ is a submersion, where  $N_{\kappa}^{2}$ is endowed with its canonical metric $ds^{2}=\rho^{2}(dx^{2}+dy^{2}),$ while $\rho=1$ or $\rho=\frac{2}{1+\kappa(x^{2}+y^{2})}$ according to $\kappa=0$ or $\kappa\neq0$, respectively. The natural orthonormal frame on $N_{\kappa}^{2}$ is given by $\{e_{1}=\rho^{-1}\partial_{x}, e_{2}=\rho^{-1}\partial_{y}\}.$ Moreover, translations along  the fibers are isometries, therefore $E_3$ is a Killing vector field. Thus, considering horizontal lifting of  $\{e_1, e_2\}$ we obtain $\{E_1, E_2\}$, which jointly with $E_3$ gives an orthonormal frame $\{E_1, E_2, E_3\}$ on $M^3.$ In addition,  since $\{\partial _x, \partial _y\}$ is a natural frame for $N_\kappa^2,$ then a natural frame  for $M^3$ is $\{\partial _x, \partial _y, \partial _t\},$ where $\partial _t$ is tangent to the fibers. Using this frame we have the following lemma for a non compact $3$-dimensional homogeneous manifold which can be found in \cite{thurston}.

\begin{lemma}
\label{lem1} Rewriting the referential $\{E_1, E_2, E_3\}$ in terms of
$\{\partial _x, \partial _y, \partial _t\}$, we have:
\begin{enumerate}
\item If $\kappa   \neq 0,$ then
$E_{1}=\frac{1}{\rho}\partial_{x}+2\kappa\tau y
\partial_{t},$
$E_{2}=\frac{1}{\rho}\partial_{y}-2\kappa\tau x\partial_{t}$
and $E_{3}=\partial_{t}.$

\item If $\kappa=0,$ then $E_{1}=\partial_{x}-\tau y\partial_{t},$ $E_{2}=\partial_{y}+\tau x\partial_{t}$ and $E_{3}=\partial_{t}.$
\end{enumerate}
Moreover, endowing $M^3$ with the metric
\begin{eqnarray*}
g = \left \{ \begin{array}{ll}
                dx^2+dy^2+[\tau(xdy-ydx)+dt]^2 , \,  \kappa = 0\\
                \rho^2(dx^2+dy^2)+\left[2\kappa\tau \rho \left(ydx-
                xdy\right)+dt\right]^2, \,  \kappa \neq 0,
                \end{array} \right.
\end{eqnarray*}we have the following identities for its Riemannian connection $\nabla$:

\begin{equation} \left\{\begin{array}{lll}
                \nabla_{E_1}E_1 =  \kappa yE_2 &  \nabla_{E_1}E_2 =
                -\kappa yE_1+\tau E_3 &  \nabla_{E_1}E_3 = - \tau E_2\\
                \nabla_{E_2}E_1 = -\kappa xE_2-\tau E_3 &  \nabla_{E_2}E_2 = \kappa xE_1 &
                 \nabla_{E_2}E_3 = \tau E_1 \\
                \nabla_{E_3}E_1 = - \tau E_2 &  \nabla_{E_3}E_2 = \tau E_1 &
                \nabla_{E_3}E_3 = 0.
                \end{array}\right.  \\
\end{equation}
                In particular, we obtain from the above identities the following relations for the Lie brackets:
\begin{equation}\label{bracket12}
[E_{1},E_{2}]=-\kappa yE_1+\kappa xE_2+2\tau E_3
\end{equation}
and
\begin{equation}\label{bracketi3}
 [E_{1},E_{3}]=[E_{2},E_{3}]=0.\\
\end{equation} Moreover, up to isometries, we may assume that $\kappa=-1,0$ or $1.$
\end{lemma}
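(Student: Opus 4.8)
The plan is to derive every assertion from the metric-bundle description of $E^{3}(\kappa,\tau)$, using the Koszul formula together with the structural identity $\nabla_{X}E_{3}=\tau X\times E_{3}$ recorded above. Write $\omega$ for the bracketed connection $1$-form appearing in $g$, so that $g=\pi^{*}(ds^{2})+\omega^{2}$, the vertical distribution $\ker d\pi$ is spanned by $\partial_{t}$, and $\omega(\partial_{t})=1$. First I would fix the frame: $E_{3}=\partial_{t}$ is the unit vertical field, while $E_{1},E_{2}$ are the horizontal lifts of $e_{1}=\rho^{-1}\partial_{x}$ and $e_{2}=\rho^{-1}\partial_{y}$, each $E_{i}$ being characterized by $d\pi(E_{i})=e_{i}$ together with $\omega(E_{i})=0$. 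Writing $E_{1}=\rho^{-1}\partial_{x}+a_{1}\partial_{t}$ and $E_{2}=\rho^{-1}\partial_{y}+a_{2}\partial_{t}$ and imposing $\omega(E_{i})=0$ determines the coefficients $a_{i}$, producing the expressions in items (1) and (2); substituting back into $g$ confirms that $\{E_{1},E_{2},E_{3}\}$ is orthonormal. The sign conventions are pinned down by requiring the bundle curvature to be $d\omega=2\tau\,dA_{N_{\kappa}^{2}}$, equivalently $E_{1}\times E_{2}=E_{3}$.

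Next come the Lie brackets. Since the coefficients of $E_{1}$ and $E_{2}$ are independent of $t$, one reads off $[E_{1},E_{3}]=[E_{2},E_{3}]=0$, which is \eqref{bracketi3}. For \eqref{bracket12} I would apply $E_{1}$ and $E_{2}$ to the coordinate functions: in the flat case $\kappa=0$ only the $\partial_{t}$-components survive and give $2\tau\partial_{t}=2\tau E_{3}$, whereas for $\kappa\neq0$ the derivatives of the conformal factor $\rho$ contribute precisely the horizontal terms $-\kappa yE_{1}+\kappa xE_{2}$.

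The connection table then splits into an easy part and a computational part. The third column and the bottom row are immediate: the identity $\nabla_{E_{i}}E_{3}=\tau E_{i}\times E_{3}$ gives $\nabla_{E_{1}}E_{3}=-\tau E_{2}$, $\nabla_{E_{2}}E_{3}=\tau E_{1}$ and $\nabla_{E_{3}}E_{3}=0$, and since $[E_{i},E_{3}]=0$ the torsion-free condition forces $\nabla_{E_{3}}E_{i}=\nabla_{E_{i}}E_{3}$. Only the horizontal block $\nabla_{E_{i}}E_{j}$ with $i,j\in\{1,2\}$ requires genuine work; here I would use the Koszul formula in the orthonormal frame, where the three metric-derivative terms vanish and only the bracket terms from \eqref{bracket12} remain. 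Equivalently, by O'Neill's submersion formulas the horizontal part of $\nabla_{E_{i}}E_{j}$ is the lift of the Levi-Civita connection of the space form $N_{\kappa}^{2}$, namely $\nabla^{N}_{e_{1}}e_{1}=\kappa ye_{2}$, $\nabla^{N}_{e_{1}}e_{2}=-\kappa ye_{1}$, $\nabla^{N}_{e_{2}}e_{1}=-\kappa xe_{2}$, $\nabla^{N}_{e_{2}}e_{2}=\kappa xe_{1}$, while the vertical part is $\tfrac{1}{2}[E_{i},E_{j}]^{V}=\tau E_{3}$; assembling these reproduces the full table. For the final assertion, a constant rescaling $g\mapsto c^{2}g$ is a homothety that preserves the sign of $\kappa$ and scales its magnitude, so after such a rescaling one may assume $\kappa\in\{-1,0,1\}$.

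I expect the only real obstacle to be the $\kappa\neq0$ case, where the nonconstant factor $\rho=\tfrac{2}{1+\kappa(x^{2}+y^{2})}$ injects derivative terms into both the brackets and the Koszul computation. This is exactly where the structural identity $\nabla_{X}E_{3}=\tau X\times E_{3}$ pays off: it collapses all the mixed horizontal--vertical terms into a single clean expression and leaves only the intrinsic two-dimensional computation on $N_{\kappa}^{2}$.
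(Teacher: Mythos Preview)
The paper does not actually supply a proof of this lemma: it is stated as a known structural fact about the non-compact homogeneous spaces $E^{3}(\kappa,\tau)$ and attributed to Thurston's book. So there is no argument in the paper to compare your proposal against.

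That said, your outline is a correct and self-contained derivation. Identifying $E_{1},E_{2}$ as the horizontal lifts of $e_{1},e_{2}$ by imposing $\omega(E_{i})=0$ recovers the coordinate expressions; the bracket relations follow by direct differentiation, with the $\rho$-derivatives supplying the horizontal terms when $\kappa\neq0$; and the connection table is obtained cleanly by combining the bundle identity $\nabla_{X}E_{3}=\tau X\times E_{3}$ (for the vertical column and, via torsion-freeness, the bottom row) with O'Neill's decomposition $\nabla_{E_{i}}E_{j}=\widetilde{\nabla^{N}_{e_{i}}e_{j}}+\tfrac{1}{2}[E_{i},E_{j}]^{V}$ for the horizontal block. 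This is exactly the kind of argument one would extract from the cited reference, so your proposal effectively fills in what the paper leaves to the literature. The only cosmetic point is that your final remark about rescaling should, strictly speaking, invoke a homothety of the full bundle metric (rescaling both the base curvature $\kappa$ and the bundle curvature $\tau$ compatibly), but this does not affect the conclusion that one may normalize $\kappa\in\{-1,0,1\}$.
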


\subsection{Compact $3$-dimensional homogeneous manifold with isometry group of dimension $4$}
They occur when $\tau\neq 0$ and $\kappa>0,$ more precisely, we recall that compact homogeneous Riemannian manifolds are Berger's
spheres; see \cite{peter} and \cite{thurston}.  For sake of completeness and to keep the same notation we shall choose the next construction for  Berger's spheres, for more details see
\cite{besse}. In what follow,  Berger's sphere is a standard $3$-dimensional sphere
\begin{eqnarray*}
\mathbb{S}^3 =\{(z,w)\in \Bbb{C}^{2};\,|z|^{2}+|w|^{2}=1\}
\end{eqnarray*}
endowed with the family of metrics
\begin{equation*}
g_{\kappa,\tau}(X,Y)=\frac{4}{\kappa}\Big[\langle X, Y\rangle + \Big(\frac{4\tau^{2}}{\kappa}-1 \Big)\langle X, V\rangle \langle Y, V\rangle\Big],
\end{equation*}
where $\langle\, , \,\rangle$ stands for the round metric on $\Bbb{S}^{3},\,V_{(z,w)}=(iz,iw)$ for each $(z,w)\in \Bbb{S}^{3}$ and $\kappa,\,\tau$ are real numbers with $\kappa>0$ and $\tau\neq0.$ In particular, $g_{4,1}$ is the round metric. In addition,  Berger's sphere $(\Bbb{S}^{3},g_{\kappa,\tau})$ will be denoted by  $\Bbb{S}^{3}_{\kappa,\tau},$ which is a model for a homogeneous space $M^3$ when $\kappa>0$ and $\tau\neq 0.$ In this case the vertical Killing vector field is given by $E_{3}=\frac{\kappa}{4\tau}V.$ In order to obtain an orthonormal frame we choose $E_{1}(z,w)=\frac{\sqrt{\kappa}}{2}(-\overline{w},\overline{z})$ and $E_{2}(z,w)=\frac{\sqrt{\kappa}}{2}(-i\overline{w},i\overline{z}).$

 Using this frame we have the following lemma for $\Bbb{S}^{3}_{\kappa,\tau}$ which can be found in \cite{besse}.
 \begin{lemma}
 \label{lem1berger} The Riemannian connection $\nabla$ on $\Bbb{S}^{3}_{\kappa,\tau}$ is determined by
\begin{equation} \left\{\begin{array}{lll}
                \nabla_{E_1}E_1 = 0 &  \nabla_{E_1}E_2 = - \tau E_3 &  \nabla_{E_1}E_3 =  \tau E_2\\
                \nabla_{E_2}E_1 =  \tau E_3&  \nabla_{E_2}E_2 = 0 &
                 \nabla_{E_2}E_3 = - \tau E_1 \\
                \nabla_{E_3}E_1 = -\displaystyle\frac{\kappa-2\tau^{2}}{2\tau }E_2 & \nabla_{E_3}E_2 = \displaystyle
                \frac{\kappa-2\tau^{2}}{2 \tau }E_1 &
                 \nabla_{E_3}E_3 = 0.
              \end{array}\right.  \\
\end{equation}It is immediate to verify that the Lie brackets satisfy:
 \begin{equation}
 \label{bracketberger}
 [E_{1},E_{2}]=-2\tau E_{3},\, [E_{2},E_{3}]=-\frac{\kappa}{2 \tau} E_{1},\,[E_{1},E_{3}]=\frac{\kappa}{2\tau }E_{2}.
 \end{equation}

 \end{lemma}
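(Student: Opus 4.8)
The plan is to treat $\mathbb{S}^{3}_{\kappa,\tau}$ as the Lie group $SU(2)\cong\mathbb{S}^{3}$ equipped with the left-invariant metric $g_{\kappa,\tau}$, for which $\{E_{1},E_{2},E_{3}\}$ is a left-invariant orthonormal frame; once this is granted the entire computation is governed by the structure constants of the frame, that is, by its Lie brackets. First I would verify that $\{E_{1},E_{2},E_{3}\}$ is genuinely $g_{\kappa,\tau}$-orthonormal. Writing $z=x_{1}+ix_{2}$, $w=x_{3}+ix_{4}$, the fields $E_{1},E_{2}$ and $V$ become explicit linear fields on $\mathbb{R}^{4}$, and a direct computation with the Euclidean inner product $\langle\,,\,\rangle$ gives that $E_{1},E_{2},V$ are mutually orthogonal with $\langle E_{1},E_{1}\rangle=\langle E_{2},E_{2}\rangle=\tfrac{\kappa}{4}$, $\langle E_{1},V\rangle=\langle E_{2},V\rangle=0$ and $\langle V,V\rangle=1$. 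Feeding these into the definition of $g_{\kappa,\tau}$ and using $E_{3}=\tfrac{\kappa}{4\tau}V$ yields $g_{\kappa,\tau}(E_{i},E_{j})=\delta_{ij}$; one also checks $\langle E_{i},(z,w)\rangle=0$, so the three fields are tangent to $\mathbb{S}^{3}$.

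The crucial step is the bracket computation. Since, in the real coordinates $x_{1},\dots,x_{4}$, the fields $E_{1},E_{2},V$ all extend to \emph{linear} vector fields $x\mapsto Ax$ (complex conjugation being real-linear), their commutators are computed algebraically through the anti-homomorphism $[X_{A},X_{B}]=X_{[B,A]}$. Carrying this out produces $[E_{1},E_{2}]=-\tfrac{\kappa}{2}V$ together with the analogous relations for $[E_{2},V]$ and $[E_{1},V]$; substituting $V=\tfrac{4\tau}{\kappa}E_{3}$ converts these into $[E_{1},E_{2}]=-2\tau E_{3}$, $[E_{2},E_{3}]=-\tfrac{\kappa}{2\tau}E_{1}$ and $[E_{1},E_{3}]=\tfrac{\kappa}{2\tau}E_{2}$, which are exactly the relations (\ref{bracketberger}). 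This simultaneously confirms that the three fields close under bracket, so they span $\mathfrak{su}(2)$.

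With the frame orthonormal, all coefficients $g_{\kappa,\tau}(E_{i},E_{j})$ are constant, so the three ``derivative of the metric'' terms in the Koszul formula vanish and it collapses, writing $g=g_{\kappa,\tau}$, to
\[
2\,g(\nabla_{E_{i}}E_{j},E_{k})=g([E_{i},E_{j}],E_{k})-g([E_{j},E_{k}],E_{i})+g([E_{k},E_{i}],E_{j}).
\]
Inserting the structure constants $c_{12}^{3}=-2\tau$, $c_{23}^{1}=-\tfrac{\kappa}{2\tau}$, $c_{31}^{2}=-\tfrac{\kappa}{2\tau}$ read off from (\ref{bracketberger}) and letting $i,j,k$ range over $\{1,2,3\}$ produces each covariant derivative in turn. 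For instance $2g(\nabla_{E_{3}}E_{1},E_{2})=c_{31}^{2}-c_{12}^{3}+c_{23}^{1}=\tfrac{2\tau^{2}-\kappa}{\tau}$, whence $\nabla_{E_{3}}E_{1}=-\tfrac{\kappa-2\tau^{2}}{2\tau}E_{2}$, and the remaining entries of the table follow in the same mechanical way. The bracket identities (\ref{bracketberger}) are then recovered a posteriori from the torsion-free relation $[E_{i},E_{j}]=\nabla_{E_{i}}E_{j}-\nabla_{E_{j}}E_{i}$, which also serves as a consistency check on the whole table.

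I expect the only genuinely delicate point to be the bracket computation: because $E_{1}$ and $E_{2}$ involve complex conjugation they are conjugate-linear rather than holomorphic, so the matrices $A$ representing them must be written out carefully in real coordinates and the sign in $[X_{A},X_{B}]=X_{[B,A]}$ tracked precisely. The orthonormality verification against the anisotropic factor $\tfrac{4\tau^{2}}{\kappa}-1$ is the other place where a constant could easily be mislaid. Everything downstream of the brackets is purely computational.
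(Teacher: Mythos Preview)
The paper does not actually prove this lemma: it states the connection table and bracket relations and simply refers the reader to Besse \cite{besse}. So there is no ``paper's own proof'' to compare against; your proposal is in fact more than the paper provides.

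Your argument is correct and self-contained. The orthonormality check against $g_{\kappa,\tau}$ is right (indeed $\langle E_{1},E_{1}\rangle=\langle E_{2},E_{2}\rangle=\kappa/4$, $\langle E_{i},V\rangle=0$ for $i=1,2$, $\langle V,V\rangle=1$, and the anisotropic term then forces $g_{\kappa,\tau}(E_{3},E_{3})=1$). Your bracket computation via the matrix realisation of the linear fields and $[X_{A},X_{B}]=X_{[B,A]}$ gives $[E_{1},E_{2}]=-\tfrac{\kappa}{2}V=-2\tau E_{3}$ and the two companion relations, matching (\ref{bracketberger}). Once those two ingredients are in hand the reduced Koszul formula is exactly the right tool, and your sample computation of $\nabla_{E_{3}}E_{1}$ is correct. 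One small remark: you do not actually need the left-invariance claim for the argument to run --- what you use is only that the metric coefficients $g_{\kappa,\tau}(E_{i},E_{j})$ and the structure constants $c_{ij}^{k}$ are constant, both of which you verify directly. (That said, the frame \emph{is} left-invariant on $SU(2)$: $V$ is the left translate of $\mathrm{diag}(i,-i)\in\mathfrak{su}(2)$ and $E_{1},E_{2}$ are scalar multiples of the left translates of the other two standard generators, so your framing is accurate.) The caveats you flag --- the conjugate-linearity of $E_{1},E_{2}$ and the sign bookkeeping in the linear-field bracket --- are exactly the places to be careful, and you have handled them correctly.
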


\section{Key Results}

As a consequence of Lemmas \ref{lem1} and \ref{lem1berger} we can explicit the Ricci tensor of a $3$-dimensional homogeneous Riemannian manifold with $4$-dimensional isometry group according to next lemma.

\begin{lemma}
\label{keylemma}Let $(M^3,\,g)$ be a $3$-dimensional
homogeneous Riemannian manifold with $4$-dimensional isometry
group. Then, each frame $\{E_1, E_2, E_3\}$ constructed before on $M^3$ diagonalizes the Ricci tensor. More precisely, we have
\begin{equation}
\label{qem1}
Ric = (\kappa-2\tau^2)g - (\kappa-4\tau^2)E_{3}^{\flat}\otimes
E_{3}^{\flat}.
\end{equation}
\end{lemma}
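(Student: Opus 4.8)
The plan is to prove \thmref{keylemma} by a direct computation of the sectional curvatures using the explicit Christoffel-type data recorded in Lemmas~\ref{lem1} and \ref{lem1berger}, and then assembling the Ricci tensor in the orthonormal frame $\{E_1,E_2,E_3\}$. The key observation is that \eqref{qem1} is equivalent to the three diagonal statements $Ric(E_1,E_1)=Ric(E_2,E_2)=2\tau^2$ and $Ric(E_3,E_3)=2(\kappa-2\tau^2)+(\kappa-4\tau^2)=2\kappa-... $ — more precisely, reading off \eqref{qem1}, one must obtain $Ric(E_1,E_1)=Ric(E_2,E_2)=(\kappa-2\tau^2)$ and $Ric(E_3,E_3)=(\kappa-2\tau^2)-(\kappa-4\tau^2)=2\tau^2$, together with the vanishing of all off-diagonal entries $Ric(E_i,E_j)$ for $i\neq j$. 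So the whole lemma reduces to verifying these six scalar identities.

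First I would fix the curvature convention $R(X,Y)Z=\nabla_X\nabla_Y Z-\nabla_Y\nabla_X Z-\nabla_{[X,Y]}Z$ and write $Ric(E_i,E_i)=\sum_{k\neq i}\langle R(E_i,E_k)E_k,E_i\rangle$, the sum of sectional curvatures of the coordinate planes containing $E_i$. I would then compute the three sectional curvatures $K_{12}=\langle R(E_1,E_2)E_2,E_1\rangle$, $K_{13}$ and $K_{23}$ by substituting the connection table and the Lie-bracket relations \eqref{bracket12}--\eqref{bracketi3} (in the non-compact case) or \eqref{bracketberger} (in the Berger-sphere case) into the curvature formula. For instance, in the Berger case $\nabla_{E_1}E_2=-\tau E_3$, $\nabla_{E_2}E_2=0$, $\nabla_{E_3}E_2=\frac{\kappa-2\tau^2}{2\tau}E_1$ and $[E_1,E_2]=-2\tau E_3$, so $R(E_1,E_2)E_2=\nabla_{E_1}(0)-\nabla_{E_2}(-\tau E_3)-\nabla_{[E_1,E_2]}E_2$, which collapses to a multiple of $E_1$ after using $\nabla_{E_3}E_2$; pairing with $E_1$ yields $K_{12}=\kappa-3\tau^2$. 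The analogous computations should give $K_{13}=K_{23}=\tau^2$. Summing then yields $Ric(E_1,E_1)=K_{12}+K_{13}=\kappa-2\tau^2$, symmetrically $Ric(E_2,E_2)=\kappa-2\tau^2$, and $Ric(E_3,E_3)=K_{13}+K_{23}=2\tau^2$, which matches the claimed diagonal values.

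Next I would check that the off-diagonal components vanish, i.e.\ $Ric(E_i,E_j)=\sum_k\langle R(E_i,E_k)E_k,E_j\rangle=0$ for $i\neq j$. Here I expect the structure of the connection tables to make each such sum reduce to a cancellation: the terms produced by the $\nabla\nabla$ and the $\nabla_{[\cdot,\cdot]}$ pieces align along the same direction and subtract off, reflecting the homogeneity and the fact that the frame is adapted to the fibration. I would carry this out once for each of the three pairs $(1,2),(1,3),(2,3)$. Because both the non-compact table of \lemref{lem1} and the compact table of \lemref{lem1berger} must be handled, I would present the argument in the geometry of \lemref{lem1berger} and remark that the non-compact case is entirely parallel, the only change being the position-dependent coefficients $\kappa x,\kappa y$ in the connection, which drop out of the curvature sums (they enter through terms of the form $\nabla_{[E_1,E_2]}E_k$ that precisely cancel the $x,y$-dependent pieces coming from differentiating $\kappa x E_1$ and $\kappa y E_2$).

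The main obstacle I anticipate is bookkeeping rather than conceptual: in the non-compact case the connection coefficients depend on the base coordinates $x,y$ through \lemref{lem1}, so the terms $\nabla_{E_i}(\nabla_{E_j}E_k)$ involve differentiating these coefficients along the frame vectors $E_i=\rho^{-1}\partial_x+\dots$, and one must confirm that all coordinate-dependent contributions cancel so that the resulting $Ric$ is genuinely constant in the frame (as homogeneity demands). The cleanest way to control this is to verify directly that $K_{12},K_{13},K_{23}$ come out as constants, which certifies both that the frame diagonalizes $Ric$ and that \eqref{qem1} holds with the stated constant coefficients; once the three sectional curvatures are pinned down the rest is immediate.
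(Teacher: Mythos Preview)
Your proposal is correct and follows essentially the same route as the paper: both arguments compute the components $Ric(E_i,E_j)$ directly from the curvature formula using the connection tables of Lemmas~\ref{lem1} and \ref{lem1berger}, verifying the three diagonal values $\kappa-2\tau^2,\kappa-2\tau^2,2\tau^2$ and the vanishing of the off-diagonal entries. The only cosmetic differences are that you phrase the diagonal computation via the sectional curvatures $K_{12}=\kappa-3\tau^2$, $K_{13}=K_{23}=\tau^2$ before summing, and you present the Berger case in detail while treating the non-compact case as parallel, whereas the paper does the reverse; neither constitutes a genuinely different approach.
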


\begin{proof} Firstly, we consider $(M^3,\,g)$ a non compact $3$-dimensional
homogeneous Riemannian manifold with $4$-dimensional isometry group. Since we  can write the Ricci tensor as follows
\begin{eqnarray}\label{P2.1/1}
Ric (X, Y) =  \sum_{j, k =1}^{3} \langle X, E_j \rangle \langle Y,
E_k \rangle Ric (E_j, E_k),
\end{eqnarray}in order to find $Ric (E_j, E_k)$  we shall show that $Ric (E_j, E_k )=\lambda_{j}\delta_{jk}.$ Indeed, using Lemma \ref{lem1} we have

\begin{eqnarray*}
Ric (E_1, E_1) &=& \langle\nabla_{E_{2}} \nabla_{E_{1}}E_{1}-\nabla_{E_{1}} \nabla_{E_{2}}E_{1}+\nabla_{[E_{1},E_{2}]} E_{1},E_{2}\rangle\\& + & \langle\nabla_{E_{3}} \nabla_{E_{1}}E_{1}-\nabla_{E_{1}} \nabla_{E_{3}}E_{1}+\nabla_{[E_{1},E_{3}]} E_{1},E_{3}\rangle \\
&=& \langle\nabla_{E_{2}} \big(\kappa y E_{2}\big)+\nabla_{E_{1}} \big(\kappa x E_{2}+\tau E_{3}\big)-\kappa y\nabla_{E_{1}}E_{1}+\kappa x\nabla_{E_{2}}E_{1}+2\tau \nabla_{E_{3}}E_{1},E_{2}\rangle\\& + & \langle\nabla_{E_{3}} \big(\kappa yE_{2}\big)+\nabla_{E_{1}} \big(\tau E_{2}\big),E_{3}\rangle \\
&=& \frac{2\kappa}{\rho} - \kappa^2(x^2+y^2) - 2\tau^2 =
\kappa-2\tau^2.
\end{eqnarray*}
In a similar way we have $Ric (E_2, E_2)= \kappa-2\tau^2$ and $Ric (E_3, E_3)= 2\tau^2.$

Now we claim that  $Ric (E_j, E_k) = 0$ for $j \neq k$. In fact, let us compute only $Ric(E_1, E_2),$ since the others terms follow mutatis mutandis.
\begin{eqnarray*}
Ric (E_1, E_2) &=& \langle\nabla_{E_{3}} \nabla_{E_{1}}E_{2}-\nabla_{E_{1}} \nabla_{E_{3}}E_{2}+\nabla_{[E_{1},E_{3}]} E_{2},E_{3}\rangle\\
&=& \langle\nabla_{E_{3}} \big( \kappa y E_{1}+\tau E_{3}\big)-\nabla_{E_{1}} \big(\tau E_1\big),E_{3}\rangle\\
&=& \kappa y\langle\nabla_{E_{3}} E_{1},E_{3}\rangle-\tau \langle \nabla_{E_{1}} E_1,E_{3}\rangle=0,
\end{eqnarray*}which finishes our claim. Therefore, using $(\ref{P2.1/1})$, we deduce
\begin{eqnarray}
Ric = (\kappa-2\tau^2)g - (\kappa-4\tau^2)E_{3}^{\flat}\otimes
E_{3}^{\flat},
\end{eqnarray}which completes the proof in this case. We now point out that using Lemma \ref{lem1berger},  straightforward computations as above give the same result for Berger's spheres. Then we complete the proof of the lemma.
\end{proof}

\begin{lemma}
\label{lemkey2}Let $\big(M^3,\,g,\,X,\,\lambda
\big)$ be a non compact $3$-dimensional homogeneous
quasi-Einstein metric with $4$-dimensional isometry group. If
$E_1, E_2$ and $E_3$ are given by Lemma \ref{lem1}, then hold:
\begin{equation}
\label{e1.lem2} E_1\langle X, E_1 \rangle - \kappa y\langle X, E_2
\rangle =\frac{1}{m}\langle X, E_1\rangle^{2}+\lambda -
(\kappa-2\tau^2).
\end{equation}
\begin{equation} \label{e2.lem2}
E_2\langle X, E_2 \rangle - \kappa x\langle X, E_1 \rangle =
\frac{1}{m}\langle X, E_2\rangle^{2}+\lambda - (\kappa-2\tau^2).
\end{equation}
\begin{equation}\label{e3.lem2}
E_3\langle X, E_3 \rangle  = \frac{1}{m}\langle X, E_3\rangle^{2}
+\lambda - 2\tau^2.
\end{equation}
\begin{equation}\label{e4.lem2}
E_2\langle X, E_1 \rangle + E_1\langle X, E_2 \rangle + \kappa(y
\langle X, E_1 \rangle + x \langle X, E_2 \rangle) =
\frac{2}{m}\langle X, E_1\rangle \langle X, E_2\rangle.
\end{equation}

\begin{equation}\label{e5.lem2}
E_3\langle X, E_1 \rangle + E_1\langle X, E_3 \rangle + 2\tau
\langle X, E_2 \rangle = \frac{2}{m}\langle X, E_1\rangle \langle
X, E_3\rangle.
\end{equation}

\begin{equation}\label{e6.lem2}
E_3\langle X, E_2 \rangle + E_2\langle X, E_3 \rangle - 2\tau
\langle X, E_1 \rangle = \frac{2}{m}\langle X, E_2\rangle \langle
X, E_3\rangle.
\end{equation}

\end{lemma}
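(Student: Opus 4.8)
The plan is to mimic the proof of Lemma~\ref{lemsol3a}: I would evaluate the quasi-Einstein equation (\ref{eqprincqem1}) on the orthonormal frame $\{E_1,E_2,E_3\}$ of Lemma~\ref{lem1} and read off the six scalar identities. Writing $X=\sum_{k=1}^{3}\langle X,E_k\rangle E_k$, everything is contained in the tensorial identity $Ric_X^m(E_i,E_j)=\lambda\,\delta_{ij}$, so the task reduces to computing, for each pair $(i,j)$, the three contributions $Ric(E_i,E_j)$, $\tfrac12(\mathcal{L}_Xg)(E_i,E_j)$ and $\tfrac1m\langle X,E_i\rangle\langle X,E_j\rangle$ coming from (\ref{eqprincqem}).

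The Ricci part is supplied directly by Lemma~\ref{keylemma}, giving $Ric(E_1,E_1)=Ric(E_2,E_2)=\kappa-2\tau^2$, $Ric(E_3,E_3)=2\tau^2$ and vanishing off-diagonal terms, while the last contribution is just the pointwise product of the frame components of $X$. For the Lie derivative I would use the identity $(\mathcal{L}_Xg)(Y,Z)=\langle\nabla_YX,Z\rangle+\langle Y,\nabla_ZX\rangle$, valid for the Levi-Civita connection, so that on the diagonal the factor $\tfrac12$ cancels and one is left with $\langle\nabla_{E_i}X,E_i\rangle$, whereas off the diagonal one gets the symmetrized sum $\tfrac12(\langle\nabla_{E_i}X,E_j\rangle+\langle\nabla_{E_j}X,E_i\rangle)$. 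To evaluate $\langle\nabla_{E_i}X,E_j\rangle$ I would substitute the expansion of $X$, apply the Leibniz rule, and insert the connection coefficients of Lemma~\ref{lem1}.

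For instance, from $\nabla_{E_1}E_1=\kappa yE_2$ and $\nabla_{E_1}E_2=-\kappa yE_1+\tau E_3$ one finds $\langle\nabla_{E_1}X,E_1\rangle=E_1\langle X,E_1\rangle-\kappa y\langle X,E_2\rangle$; combining this with $Ric(E_1,E_1)=\kappa-2\tau^2$ and the term $-\tfrac1m\langle X,E_1\rangle^2$ and rearranging $Ric_X^m(E_1,E_1)=\lambda$ gives precisely (\ref{e1.lem2}). The entries $(2,2)$ and $(3,3)$ yield (\ref{e2.lem2}) and (\ref{e3.lem2}) in the same way; in the latter, since $\nabla_{E_3}E_3=0$ and $\nabla_{E_3}E_1=-\tau E_2$, $\nabla_{E_3}E_2=\tau E_1$ are orthogonal to $E_3$, only $E_3\langle X,E_3\rangle$ survives on the left. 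The three off-diagonal relations (\ref{e4.lem2})--(\ref{e6.lem2}) follow by summing the two mixed covariant derivatives, where the skew terms carrying the coefficients $\pm\tau$ and $\pm\kappa x,\pm\kappa y$ either cancel or add up to the $2\tau$ and $\kappa$ factors displayed on the left-hand sides. I do not anticipate any genuine difficulty beyond careful sign bookkeeping in the connection table of Lemma~\ref{lem1} and tracking which cross terms survive the symmetrization; the statement is a direct transcription of (\ref{eqprincqem1}) in this frame.
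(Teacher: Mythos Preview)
Your proposal is correct and follows essentially the same approach as the paper: rewrite $Ric_X^m(E_i,E_j)=\lambda\delta_{ij}$ using $(\mathcal{L}_Xg)(E_i,E_j)=\langle\nabla_{E_i}X,E_j\rangle+\langle\nabla_{E_j}X,E_i\rangle$, then expand $\langle\nabla_{E_i}X,E_j\rangle=E_i\langle X,E_j\rangle-\langle X,\nabla_{E_i}E_j\rangle$ and plug in the connection table of Lemma~\ref{lem1} together with the Ricci values of Lemma~\ref{keylemma}. The paper records the general identity $E_i\langle X,E_j\rangle+E_j\langle X,E_i\rangle-\langle X,\nabla_{E_i}E_j+\nabla_{E_j}E_i\rangle=2(\lambda\delta_{ij}-R_{ij}+\tfrac1mX_iX_j)$ once and then specializes, while you describe working case by case, but the content is identical.
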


\begin{proof}We notice that by using equation (\ref{eqprincqem}) we can write
\begin{equation}\label{L3.2/1}
\mathcal{L}_Xg(E_i, E_j) = 2 \left(\lambda \delta_{ij} - Ric
(E_i,E_j)+\frac{1}{m}\langle X,E_i\rangle \langle X,E_j\rangle
\right).
\end{equation}
Taking into account that $\mathcal{L}_Xg(E_i, E_j)=\langle \nabla_{E_i}X,E_j\rangle+\langle \nabla_{E_j}X,E_i\rangle$ we use the compatibility of
the metric $g$ to infer
\begin{equation}\label{L3.2/2}
E_i \langle X, E_j \rangle + E_j \langle X, E_i \rangle - \langle
X, \nabla_{E_i}E_j + \nabla_{E_j}E_i \rangle = 2 \left(\lambda
\delta_{ij} - R_{ij}+\frac{1}{m}X_{i}X_{j}\right).
\end{equation}
Therefore, using Lemma \ref{lem1} and (\ref{L3.2/2}) straightforward computations give the desired statements.
\end{proof}

\section{Proof of Theorem \ref{thm2qem}}

\begin{proof}
Since $M^{3}$ is a $3$-dimensional homogeneous manifold, its
isometry group has dimension $3$, $4$ or $6$. Making use of
Theorem \ref{thmsol3} we can discard $Sol^{3}$. Moreover, when
$dim\,Iso\big(M^3,g\big)=6$ we have  space forms,
which give Einstein metrics. Therefore, it remains to describe
gradient quasi-Einstein structures of homogeneous spaces with isometry
group of dimension $4.$

To this end, we start solving  ODE  (\ref{e3.lem2}) to conclude that either $$\langle \nabla f,E_{3}\rangle=\pm\sqrt{-m(\lambda-2\tau^2)}$$ or $$\langle \nabla f, E_3 \rangle = - \sqrt{-m(\lambda-2\tau^2)}\tanh\left[\sqrt{-\frac{\lambda-2\tau^2}{m}}(\psi+t)\right],$$ where $\psi\in C^{\infty}(M^3)$ does not depend on $t.$ Taking into account this two possibilities to $\langle\nabla f, E_{3}\rangle$ we conclude that $f$ is given by, either
\begin{eqnarray*}
f = \varphi \pm \sqrt{-m(\lambda-2\tau^2)}\hspace{0,05cm}t,
\end{eqnarray*}
or
\begin{eqnarray*}
f = \varphi - m \log \cosh \left[\sqrt{-\frac{\lambda-2\tau^2}{m}}(\psi+t)\right],
\end{eqnarray*}
where $\varphi\in C^{\infty}(M^3)$ does not depend on $t.$

Now, we shall divide this part of the proof in two cases.

First, if $f = \varphi \pm \sqrt{-m(\lambda-2\tau^2)}\hspace{0,05cm}t,$ then we substitute $f$ in equations (\ref{e5.lem2}) and (\ref{e6.lem2}), respectively,  to arrive at

\begin{eqnarray}
\label{eq1pthmA}
\tau E_2(f) = \pm \sqrt{-\frac{\lambda-2\tau^2}{m}}E_1(f)
\end{eqnarray}
and
\begin{eqnarray}
\label{eq2pthmA}
\tau E_1(f) = \mp \sqrt{-\frac{\lambda-2\tau^2}{m}}E_2(f).
\end{eqnarray}
On the other hand, by using equation (3.12) of Lemma 3.2 and item $(b)$ of Proposition 3.6, both in \cite{csw}, we conclude that $\lambda$ and $\tau$ can not be zero simultaneously.

Therefore, our two possibilities (\ref{eq1pthmA}) give
\begin{equation}
\label{eq3plll}
E_1(f) = E_2(f) = 0,
\end{equation} which substituted in (\ref{e1.lem2}) implies $\lambda=\kappa-2\tau^2.$ We notice that $\mathbb{S}_{\kappa}^{2}
\times \mathbb{R}$ has $\tau=0,$ therefore, if $\big(M^3, g\big)=\mathbb{S}_{\kappa}^{2}
\times \mathbb{R}$ we can use Qian's theorem \cite{qian} to conclude $\mathbb{S}_{\kappa}^{2}
\times \mathbb{R}$ is compact, which is a contradiction, see also \cite{kk} and \cite{fr}.  On the other hand, since $[E_{1},E_{2}]=-\kappa yE_{1}+\kappa x E_{2}+2\tau E_{3},$ we can use  (\ref{eq3plll}) to obtain $2\tau E_{3}(f)=0.$ From what it follows that $\big(M^3, g\big)$ can not be $Nil_3(\kappa, \tau)$ and $\widetilde{PSl_2}(\kappa, \tau).$ Therefore $M^3 =
\Bbb{H}_{\kappa}^2 \times \Bbb{R}$ and $\lambda = \kappa$, which finishes the first case.

Proceeding we consider $f = \varphi - m \log \cosh \left[\sqrt{-\frac{\lambda-2\tau^2}{m}}(\psi+t)\right]$. In this case we start supposing that $M^3= Nil_3(\kappa, \tau).$ Therefore, from (\ref{e5.lem2}) we obtain

\begin{eqnarray}\label{T4.1/1}
E_1E_3(f)+\tau E_2(f) = \frac{1}{m}E_1(f)E_3(f).
\end{eqnarray}
From what it follows that
\begin{eqnarray*}
\partial_{xt}^2f-\tau yE_3E_3(f) + \tau E_2(f) = \frac{1}{m}[\partial_{x}f-\tau yE_3(f)]E_3(f),
\end{eqnarray*}
which compared with (\ref{e3.lem2}) gives
\begin{eqnarray}
\label{eqzxc}
\partial_{xt}^2f-\tau (\lambda - 2\tau^2)y + \tau E_2(f) = \frac{1}{m}\partial_{x}f\partial_t f.
\end{eqnarray}
Substituting the value of $f$ in (\ref{eqzxc}) we obtain
\begin{eqnarray*}
& & \sqrt{-(\lambda-2\tau^2)}[m \tau(\partial_y\psi+\tau x)-\partial_x\varphi]\tanh\left[\sqrt{-\frac{(\lambda-2\tau^2)}{m}}(\psi+t)\right]\\
&=& \sqrt{m}[(\lambda - 2\tau^2)(\partial_x\psi-\tau y)+\tau \partial_y\varphi].
\end{eqnarray*}
Now, we notice that the right side of the previous expression does not depend on $t,$ thus, since $\lambda-2\tau^2\neq 0$ we have $\tanh\left[\sqrt{-\frac{(\lambda-2\tau^2)}{m}}(\psi+t)\right]\neq 0,$ which implies that
\begin{eqnarray}\label{T4.1/2}
\partial_x\varphi - m \tau \partial_y\psi = m \tau^2 x
\end{eqnarray}
and
\begin{eqnarray}\label{T4.1/3}
\tau \partial_y\varphi + (\lambda - 2\tau^2)\partial_x\psi = \tau(\lambda - 2\tau^2)y.
\end{eqnarray}

In a similar way we use equations (\ref{e3.lem2}) and
(\ref{e6.lem2}) to obtain

\begin{eqnarray}\label{T4.1/5}
\partial_y\varphi + m \tau \partial_x\psi = m \tau^2 y
\end{eqnarray}
and
\begin{eqnarray}\label{T4.1/6}
\tau \partial_x\varphi - (\lambda - 2\tau^2)\partial_y\psi =
\tau(\lambda - 2\tau^2)x.
\end{eqnarray}

Now, we may combine (\ref{T4.1/2}) with (\ref{T4.1/6}) and
(\ref{T4.1/3}) with (\ref{T4.1/5}) to obtain, respectively,
$\partial_{y}\psi=-\tau x$ and $\partial_{x}\psi=\tau y,$ which
gives $\tau=0.$ So, we obtain a contradiction.

Therefore, since $M^3 \neq Nil_3(\kappa, \tau),$
we can use (\ref{e5.lem2}) and (\ref{e3.lem2}) to arrive at
\begin{eqnarray}\label{T4.1/7}
\frac{1}{\rho}\partial_{xt}^2f+2\kappa\tau(\lambda - 2\tau^2)y +
\tau E_2(f) = \frac{1}{m\rho}\partial_{x}f\partial_t f.
\end{eqnarray}
Now, we substitute the value of $f$ in (\ref{T4.1/7}) to obtain
\begin{eqnarray*}
& & \sqrt{-(\lambda-2\tau^2)}\left[\frac{1}{\rho}\left(m\tau\partial_y\psi-\partial_x\varphi\right)-2m\kappa\tau^2 x\right]\tanh\left[\sqrt{-\frac{(\lambda-2\tau^2)}{m}}(\psi+t)\right]\\
&=& \sqrt{m}\left\{\frac{1}{\rho}\left[(\lambda -
2\tau^2)\partial_x\psi+\tau
\partial_y\varphi\right]+2\kappa\tau(\lambda - 2\tau^2) y\right\},
\end{eqnarray*}
by using a similar argument used previously we conclude
\begin{eqnarray}\label{T4.1/8}
\partial_x\varphi - m \tau \partial_y\psi = -2m\kappa\tau^2 x\rho
\end{eqnarray}
and
\begin{eqnarray}\label{T4.1/9}
\tau \partial_y\varphi + (\lambda - 2\tau^2)\partial_x\psi =
-2\kappa\tau(\lambda - 2\tau^2)y\rho.
\end{eqnarray}

Analogously, from (\ref{e3.lem2}) and (\ref{e6.lem2}) we have
\begin{eqnarray}\label{T4.5/4}
\frac{1}{\rho}\partial_{yt}^2f-2\kappa\tau (\lambda - 2\tau^2)x - \tau E_1(f) = \frac{1}{m}\partial_{y}f\partial_t
f.
\end{eqnarray}
Substituting the value of $f$ we obtain
\begin{eqnarray*}
&
&\sqrt{-(\lambda-2\tau^2)}\left[\frac{1}{\rho}\left(m\tau\partial_x\psi+\partial_y\varphi\right)+2m\kappa\tau^2
y
\right]\tanh\left[\sqrt{-\frac{(\lambda-2\tau^2)}{m}}(\psi+t)\right]\\
&=& - \sqrt{m}\left\{\frac{1}{\rho}\left[(\lambda -
2\tau^2)\partial_y\psi - \tau
\partial_x\varphi)\right]-2\kappa\tau(\lambda-2\tau^2)\right\},
\end{eqnarray*}
which gives
\begin{eqnarray}\label{T4.1/10}
\partial_y\varphi + m \tau \partial_x\psi = -2m\kappa\tau^2 y\rho
\end{eqnarray}
and
\begin{eqnarray}\label{T4.1/11}
\tau \partial_x\varphi - (\lambda - 2\tau^2)\partial_y\psi =
-2\kappa\tau(\lambda - 2\tau^2)x\rho.
\end{eqnarray}
Finally, we may combine (\ref{T4.1/8}) with (\ref{T4.1/11}) and
(\ref{T4.1/9}) with (\ref{T4.1/10}), respectively, to obtain
$\partial_{y}\psi=2\kappa\tau x\rho$ and $\partial_{x}\psi=-2\kappa\tau y\rho,$ hence $\tau=0.$ Since
$\tau=0$ we can use (\ref{T4.1/8}), (\ref{T4.1/9}), (\ref{T4.1/10})
and (\ref{T4.1/11}) to conclude that $\varphi$ and $\psi$ are
constants. Thus, $f$ depends only on $t$ and then from
(\ref{e6.lem2}) we have $\lambda=\kappa<0$ and $M^3= \Bbb{H}_{\kappa}^2 \times \Bbb{R}$, given according to
Example \ref{ex2show} that was to be proved.
\end{proof}

\begin{acknowledgement}The authors would like to thank the referee for comments and
valuable suggestions. The second author is grateful to J. Case and P. Petersen for helpful conversations about this subject. \nonumber
\end{acknowledgement}


\begin{thebibliography}{BB}
\bibitem{Anderson} Anderson, M.: Scalar curvature, metric degenerations and the static vacuum Einstein equations on 3-manifolds, I. Geom. Funct. Anal. 9 (1999) n.5, 855-967.

\bibitem{AndersonKhuri} Anderson, M. and Khuri, M.: The static extension problem in General relativity. arXiv:0909.4550v1 [math.DG], (2009).

\bibitem{brG} Barros, A. and Ribeiro Jr, E.: Integral formulae on quasi-Einstein manifolds and applications. Glasgow Math. J. 54  (2012), 213-223.

\bibitem{besse} Besse, A.: Einstein manifolds, Springer-Verlag, New York (2008).

\bibitem{brozosGarciagavino} Brozos-V\'{a}zquez, M., Garc\'{i}a-R\'{i}o, E., Gavino-Fern\'{a}ndez, S.: Locally conformally flat Lorentzian quasi-Einstein manifolds. Monatshefte f\"ur Mathematik. v.173 (2014), 175-186.

\bibitem{Cao} Cao, H.-D.: Recent progress on Ricci soliton. Adv. Lect. Math. (ALM), 11 (2009), 1-38.

\bibitem{csw} Case, J., Shu, Y. and Wei, G.: Rigidity of quasi-Einstein metrics.  Differ. Geom. Appl., 29 (2011), 93-100.

\bibitem{case} Case, J.: On the nonexistence of quasi-Einstein metrics. Pacific J. Math. 248 (2010), 227-284.

\bibitem{Corvino} Corvino, J.: Scalar curvature deformations and a gluing construction for the Einstein constraint equations. Comm. Math. Phys. 214 (2000), 137-189.

\bibitem{daniel} Daniel, B. and Mira, P.: Existence and uniqueness of constant mean curvature spheres in $Sol^3.$ J. Reine Angew. Math. 685 (2013), 1-32.

\bibitem{daniel2} Daniel, B.: Isometric immersions into 3-dimensional homogeneous manifolds. Comment. Math. Helv. 82 (2007), 87-131.

\bibitem{monte} Eminenti, M. La Nave, G. and Mantegazza, C.: Ricci solitons: the equation point of view. Manuscripta Math. 127 (2008) 345-367.

\bibitem{fr}Fernandez-Lopez, M. and  Garc\'{i}a-R\'{i}o, E.: A remark on compact Ricci solitons. Math. Ann. (2008), 893-896.

\bibitem{hepeterwylie} He, C., Petersen, P., Wylie, W.: On the classification of warped product Einstein metrics. Commun. in Analysis and Geometry. 20 (2012) 271-312.

\bibitem{kk} Kim, D. S. and Kim, Y. H.: Compact Einstein warped product spaces with nonpositive scalar curvature. Proc. Amer. Math. Soc. 131 (2003) 2573-2576.

\bibitem{limoncu} Limoncu, M.: Modifications of the Ricci tensor and applications. Arch. Math. 95 (2010) 191-199.

\bibitem{Barrett} O'Neill, B.: Semi-Riemannian geometry with application to relativity. Academic Press, New York (1983).

\bibitem{Per} Perelman, G.: The entropy formula for the Ricci flow and its geometric applications.
Prerpint, (2002). arXiv math/0211159.

\bibitem{peter} Peter, S.: The Geometries of 3-Manifolds. Bull. London Math. Soc, 15 (1983), 401-487.

\bibitem{qian} Qian, Z.: Estimates for weighted volumes and applications,
Quart. J. Math. Oxford Ser. (2) 48 (1997), no. 190, 235-242.

\bibitem{thurston}Thurston, W. Three-Dimensional Geometry and Topology. Princeton University Press, New Jersey (1997).

\bibitem{WW} Wei, G. and Wylie, W.: Comparison geometry for the smooth metric measure spaces. In ICCM. Vol. II. Higher Education Press. (2007).

\bibitem{wylie} Wylie, W.: Complete shrinking Ricci solitons have finite fundamental group. Proc. Amer. Math. Soc., 136 (2008), 1803-1806.

\end{thebibliography}
\end{document}